\newtheorem{theorem}{Theorem}[section]
\newtheorem{lemma}[theorem]{Lemma}
\newdefinition{remark}[theorem]{Remark}
\newdefinition{corollary}[theorem]{Corollary}
\newdefinition{prop}[theorem]{Proposition}
\numberwithin{equation}{section}
\begin{document}
\begin{frontmatter}
\title{Dimensions of certain sets of continued fractions with non-decreasing partial quotients}
\author[a]{Lulu Fang}\ead{fanglulu1230@gmail.com}
\author[b]{Jihua Ma}\ead{jhma@whu.edu.cn}
\author[c]{Kunkun Song\corref{cor1}}\ead{songkunkun@hunnu.edu.cn}
\author[d]{Min Wu}\ead{minwu@scut.edu.cn}
\address[a]{School of Mathematics and Statistics, Nanjing University of Science and Technology, Nanjing, 210094, China}
\address[b]{School of Mathematics and Statistics, Wuhan University, Wuhan, 430072, China}
\address[c]{Key Laboratory of Computing and Stochastic Mathematics (Ministry of Education), Key Laboratory of Control and Optimization of Complex Systems (College of Hunan Province), School of Mathematics and Statistics, Hunan Normal University, Changsha, 410081, China}
\address[d]{School of Mathematics, South China University of Technology, Guangzhou, 510640, China}
\cortext[cor1]{Corresponding author.}

\begin{abstract}\par
Let $[a_1(x),a_2(x),a_3(x),\cdots]$ be the continued fraction expansion of $x\in (0,1)$.
This paper is concerned with certain sets of continued fractions with non-decreasing partial quotients.
As a main result, we obtain the Hausdorff dimension of the set
\[\left\{x\in(0,1): a_1(x)\leq a_2(x)\leq \cdots,\ \limsup\limits_{n\to\infty}\frac{\log a_n(x)}{\psi(n)}=1\right\}\]
for any  $\psi:\mathbb{N}\rightarrow\mathbb{R}^+$ satisfying $\psi(n)\to\infty$ as $n\to\infty$.
\end{abstract}

\begin{keyword}
continued fractions\sep growth rate\sep non-decreasing partial quotients\sep Hausdorff dimension
\MSC[2010] 11K50\sep 28A80
\end{keyword}

\end{frontmatter}

\section{Introduction}
Every real number $x \in (0,1)$ admits a \emph{continued fraction expansion} of the form
\begin{equation}\label{cfe}
x = \dfrac{1}{a_1(x) +\dfrac{1}{a_2(x)  +\dfrac{1}{a_3(x)+\ddots}}}:=[a_1(x), a_2(x), a_3(x),\cdots],
\end{equation}
where the \emph{partial quotients} $a_1(x),a_2(x),a_3(x),\cdots$ are positive integers.
 Basic properties of continued fractions may be found in \cite{IK02, Khi64} and references therein.\\
\indent
This paper falls into the category of the metric theory of continued fractions. We begin with the Borel-Bernstein theorem (see \cite{BF1912,BE1909,BE1912}), which states that for any $\psi:\mathbb{N}\rightarrow\mathbb{R}^+$ the set
 $$B(\psi)=\Bigl\{x \in (0,1):a_n(x) \geq \psi(n)\;\text{for infinitely many\;$n$'s}\Bigr\}$$
 has Lebesgue measure either $0$ or $1$ according as the series $\sum_{n \geq 1} 1/\psi(n)$ converges or diverges. We are interested at the Hausdorff dimension of the set $B(\psi)$ in the first case when it is of Lebesgue measure $0$.
 As a first result toward this direction, Good \cite{Good41} obtained some estimations on the Hausdorff dimension of $B(\psi)$ in 1941. A complete solution to the problem about the Hausdorff dimension of $B(\psi)$ was given by Wang and Wu \cite{WW08}. In other directions, there are many papers investigating the Hausdorff dimension of sets of continued fractions with some restrictions on the growth rate of their partial quotients, see for example, Hirst \cite{Hirst}, Cusick \cite{Cus90}, Wang and Wu \cite{lesWW08A}, Cao, Wang and Wu \cite{CWW}, Takahasi \cite{HT21}.\\
\indent As a consequence of the Borel-Bernstein theorem, for Lebesgue almost all $x \in (0,1)$,
\begin{equation}\label{limsupan}
\limsup\limits_{n\to\infty}\frac{\log a_n(x)}{\log n}=1.
\end{equation}
It is shown in \cite{FMS20} that the set of points for which the limsup in \eqref{limsupan} equals to a given non-negative real number has full Hausdorff dimension. More generally, let $\psi:\mathbb{N}\rightarrow\mathbb{R}^+$ be a function satisfying $\psi(n)\to\infty$ as $n\to\infty$. Fang, Ma and Song \cite{FMS20} calculated the Hausdorff dimensions of the sets
\[
E_{\sup}(\psi)=\Bigl\{x\in(0,1):\ \limsup\limits_{n\to\infty}\frac{\log a_n(x)}{\psi(n)}=1\Bigr\},
\]
\[
E_{\inf}(\psi)=\Bigl\{x\in(0,1):\ \liminf\limits_{n\to\infty}\frac{\log a_n(x)}{\psi(n)}=1\Bigr\}
\quad \text{and}\quad E(\psi)=E_{\sup}(\psi)\cap E_{\inf}(\psi).
\]
Throughout this paper,  we use $\dim_{\rm H}$ to denote the Hausdorff dimension (see \cite{Fal90}).
For the reader's convenience, we list the main results in \cite{FMS20} as follows.
\begin{theorem}\label{RAMA}
Let $\psi:\mathbb{N}\rightarrow\mathbb{R}^+$ be a function satisfying $\psi(n)\to\infty$ as $n\to\infty$.
\begin{enumerate}[(i)]
\item If $\psi(n)/n\to0$ as $n\to\infty$, then $\dim_{\rm H}E_{\sup}(\psi)=1$.
\item If $\psi(n)/n\to\alpha\ (0<\alpha<\infty)$ as $n\to\infty$, then $\dim_{\rm H}E_{\sup}(\psi)=S(\alpha)$,
where $S:\mathbb{R}^+\rightarrow(1/2,1)$ is a continuous function satisfying
\[\lim\limits_{\alpha\to0}S(\alpha)=1\ \ \text{and}\ \ \lim\limits_{\alpha\to\infty}S(\alpha)=\frac{1}{2}.\]
\item If $\psi(n)/n\to\infty$ as $n\to\infty$, then $\dim_{\rm H}E_{\sup}(\psi)=1/(A+1)$,
where $A \in [1,\infty]$ is given by
\begin{equation}\label{Asup}
\log A:=\liminf\limits_{n\to\infty}\frac{\log\psi(n)}{n}.
\end{equation}
\item $\dim_{\rm H}E_{\inf}(\psi)=1/(B+1)$,
where $B \in [1,\infty]$ is given by
\begin{equation}\label{Binf}
\log B:=\limsup\limits_{n\to\infty}\frac{\log\psi(n)}{n}.
\end{equation}
\item $\dim_{\rm H}E(\psi)=1/(C+1)$,
where $C \in [1,\infty]$ is given by
\begin{equation}\label{Clim}
C:=1+\limsup\limits_{n\to\infty}\frac{\psi(n+1)}{\psi(1)+\cdots+\psi(n)}.
\end{equation}
\end{enumerate}
\end{theorem}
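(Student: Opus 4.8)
The plan is to handle all five assertions by a common device: convert each asymptotic condition on $\log a_n(x)/\psi(n)$ into a pair of digit constraints---one of "for all large $n$" type and one of "infinitely often" type---and then read the dimension off the geometry of the cylinders $I_n(a_1,\dots,a_n)=\{y:a_i(y)=a_i,\ 1\le i\le n\}$. The two facts I will lean on throughout are the metric estimate $|I_n|\asymp(a_1\cdots a_n)^{-2}$ (from $\prod_{i\le n}a_i\le q_n\le 2^n\prod_{i\le n}a_i$ and $|I_n|\asymp q_n^{-2}$) and the observation that, for any threshold $T$, the set $I_n\cap\{a_{n+1}\ge T\}$ is a single interval of length $\asymp|I_n|/T$ sitting at one endpoint of $I_n$. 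Fixing $\varepsilon>0$ and writing $\phi^{\pm}_\varepsilon(n)=e^{(1\pm\varepsilon)\psi(n)}$, the condition $\limsup_n\log a_n/\psi(n)=1$ is exactly $\bigcap_\varepsilon\big(\{a_n\le\phi^{+}_\varepsilon(n)\ \text{eventually}\}\cap\{a_n\ge\phi^{-}_\varepsilon(n)\ \text{i.o.}\}\big)$, and the liminf condition is the same with "eventually" and "i.o." interchanged.

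For $E_{\sup}(\psi)$ (parts (i)--(iii)) the binding constraint is the infinitely-often lower bound, so the dimension coincides with that of the Borel--Bernstein set $\{a_n\ge\phi^{-}_\varepsilon(n)\ \text{i.o.}\}$ after letting $\varepsilon\to0$. For the upper bound I would cover this set by requiring the threshold to be met at a single (variable) position and optimise over which position to use; since only one good level is needed, the cheapest covering level governs the estimate and the controlling exponent is a $\liminf$. This yields the trichotomy: sub-exponential growth ($\psi(n)/n\to0$) forces $\dim=1$; exponential growth ($\psi(n)/n\to\alpha$) gives a value $S(\alpha)\in(1/2,1)$; and super-exponential growth ($\psi(n)/n\to\infty$) gives $1/(A+1)$ with $A$ as in \eqref{Asup}. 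For the matching lower bound I would build a Cantor subset of $E_{\sup}(\psi)$ placing a digit $\approx e^{\psi(n_k)}$ along a sparse sequence $(n_k)$ (to realise the limsup) while letting the remaining digits range over $\{1,\dots,M\}$---or, in the exponential regime, over a range of size calibrated to $S(\alpha)$---never exceeding $\phi^{+}_\varepsilon(n)$, and then apply the mass distribution principle and let $M\to\infty$, $\varepsilon\to0$.

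The sets $E_{\inf}(\psi)$ and $E(\psi)$ are governed by their eventual lower bounds, but they differ in a way that is the crux of the whole theorem. In $E_{\inf}(\psi)$ only $a_n\ge\phi^{-}_\varepsilon(n)$ is imposed, so the digits remain \emph{unbounded above}; the collective interval $I_n\cap\{a_{n+1}\ge\phi^{-}_\varepsilon(n+1)\}$ is not by itself an efficient cover below dimension $1/2$ (the associated series $\sum_{a\ge\phi}a^{-2s}$ diverges for $s<1/2$), and one must split each tail $\{a_n\ge\phi^{-}_\varepsilon(n)\}$ into dyadic blocks $[2^{j}\phi^{-}_\varepsilon(n),2^{j+1}\phi^{-}_\varepsilon(n))$ and cover block by block; carrying this multi-scale estimate through all levels makes the per-level exponential rate---and hence $\limsup_n\log\psi(n)/n=\log B$ as in \eqref{Binf}---the quantity that surfaces, giving $1/(B+1)$. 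In $E(\psi)=E_{\sup}(\psi)\cap E_{\inf}(\psi)$ the two-sided condition confines $a_n$ to a window $\asymp e^{\psi(n)}$, so all covering series are finite and one may instead cover, at the levels where $\psi(n+1)/(\psi(1)+\cdots+\psi(n))$ is largest, each admissible $I_n$ by the single interval of length $\asymp|I_n|e^{-\psi(n+1)}$; optimising over this best subsequence produces $1/(C+1)$ with $C$ as in \eqref{Clim}. In both cases the reverse inequality comes from an explicit Cantor subset together with a careful estimate of $\mu(B(x,r))$ for radii $r$ interpolating between consecutive cylinder scales.

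I expect two steps to be genuinely delicate. The first is the exponential regime (ii): the value $S(\alpha)$ is not explicit but is defined implicitly through the thermodynamic formalism of the Gauss map (a Bowen-type pressure equation for the dimension spectrum under the exponential growth constraint), and establishing that $S$ is continuous with $S(\alpha)\to1$ as $\alpha\to0$ and $S(\alpha)\to1/2$ as $\alpha\to\infty$ requires controlling this pressure function uniformly. The second, and in my view the true heart of the argument, is the dichotomy between (iv) and (v): one must show that unbounded digits make the \emph{exponential rate} $B$ sharp for $E_{\inf}(\psi)$---so that a sparse sequence of very large values of $\psi$, which inflates the ratio in \eqref{Clim} without changing $\limsup_n\log\psi(n)/n$, does not lower $\dim_{\rm H}E_{\inf}(\psi)$---whereas the confinement in $E(\psi)$ lets exactly those sparse large jumps collapse the dimension to $1/(C+1)$. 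Matching the upper and lower bounds on both sides of this dichotomy is where the construction must be most careful.
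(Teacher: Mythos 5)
First, a point of order: this paper does not prove Theorem \ref{RAMA} at all. It is a restatement of the main results of Fang--Ma--Song \cite{FMS20}, included ``for the reader's convenience''; the proofs live in that reference and rest on the Wang--Wu theorem \cite{WW08} and {\L}uczak-type estimates \cite{Luc97}. So there is no internal proof to compare your attempt against; it can only be measured against the strategy of the cited literature. Measured that way, your plan points in the right direction: the translation of the limsup/liminf conditions into an ``eventually'' constraint paired with an ``infinitely often'' constraint, the identification of the $E_{\sup}$ problem with Borel--Bernstein sets governed by a liminf, the role of unbounded digits in (iv) versus two-sided confinement in (v), and the covering heuristic producing $1/(C+1)$ are all the correct mechanisms, and they do match how \cite{FMS20} and \cite{WW08} argue.

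But as a proof the attempt has genuine gaps, and they sit exactly at the points you yourself flag as ``delicate.'' (1) In part (ii) the function $S(\alpha)$ is never defined: you appeal to a Bowen-type pressure equation but write down neither the equation nor a proof that its solution is continuous in $\alpha$ with limits $1$ and $1/2$; since the content of the theorem in this regime is precisely the existence and properties of $S$, nothing is actually established here. (2) In parts (i)--(v) every lower bound is delegated to ``an explicit Cantor subset together with a careful estimate of $\mu(B(x,r))$,'' but no measure is constructed and no local estimate is performed; note also that the subset must realize the limsup (or liminf, or limit) \emph{exactly} equal to $1$, not merely bounded on one side, which is where the constructions in \cite{FMS20} spend their effort. (3) In part (iv) the upper bound genuinely requires the multi-scale dyadic-block covering you gesture at --- the naive one-position cover fails below dimension $1/2$ because $\sum_a a^{-2s}$ diverges there --- and carrying it through all levels so that only the exponential rate $B$, and not the ratio defining $C$, survives is the heart of the Wang--Wu machinery; asserting that this quantity ``surfaces'' is not an argument, and it is precisely the step that separates (iv) from (v). In short: the architecture is right and consistent with the source of the theorem, but the three quantitative pillars (the pressure analysis, the sharp coverings, the mass-distribution lower bounds) are named rather than executed, so the attempt is a road map, not a proof.
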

\noindent Moreover, they also remarked that $A\leq B\leq C$ and these three values of $A, B$ and $C$ can be all different for some functions $\psi$.

Recently, the authors of \cite{FMSW21} studied the Hausdorff dimension of the intersection of $E_{\inf}(\psi)$ and the set of points with non-decreasing partial quotients, i.e.,
\[
\Lambda=\bigl\{x\in(0,1): a_{n}(x)\leq a_{n+1}(x), \forall n\geq1\bigr\}.
\]
Let us point out that $\dim_{\rm H}\Lambda=\frac{1}{2}$, which is essentially a result of Ramharter \cite{R85}, see Jordan and Rams \cite{lesJR12} for general results in the setting of infinite iterated function systems. In a previous paper \cite{FMSW21}, the authors studied the dimension of the set
$$E_{\inf}(\Lambda,\psi)= E_{\inf}(\psi) \cap\Lambda,$$
and established the following:
    \begin{theorem}\label{inf}
Let $\psi:\mathbb{N}\rightarrow\mathbb{R}^+$ be a function satisfying $\psi(n)\to\infty$ as $n\to\infty$.
\begin{enumerate}[(i)]
\item If $\psi(n)/\log n\to \alpha\ (0\leq\alpha<\infty)$ as $n\to\infty$, then
\[
\dim_{\rm H}E_{\inf}(\Lambda,\psi)=
\left\{
  \begin{array}{ll}
    0, & \hbox{$0\leq\alpha<1$;} \\
    \frac{\alpha-1}{2\alpha}, & \hbox{$\alpha \geq 1$.}
  \end{array}
\right.
\]
\item If $\psi(n)/\log n\to\infty$ as $n\to\infty$, then
\[\dim_{\rm H}E_{\inf}(\Lambda,\psi)=\frac{1}{B+1},\]
where $B$ is given by \eqref{Binf}.
\end{enumerate}
\end{theorem}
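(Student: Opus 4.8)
The plan is to prove both parts by sandwiching $\dim_{\rm H}E_{\inf}(\Lambda,\psi)$ between matching upper and lower bounds, using throughout the standard estimates that a rank-$n$ cylinder $I_n(a_1,\dots,a_n)$ has length comparable to $q_n^{-2}$ with $\prod_{i\le n}a_i\le q_n\le 2^n\prod_{i\le n}a_i$, together with the elementary fact that the number of non-decreasing integer strings $1\le a_1\le\cdots\le a_n\le M$ is $\binom{n+M-1}{n}$. The liminf condition will be used in the form: for every $\epsilon>0$ one has $a_n\ge e^{(1-\epsilon)\psi(n)}$ for all large $n$, while $a_n\le e^{(1+\epsilon)\psi(n)}$ for infinitely many $n$. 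The decisive observation, special to $\Lambda$, is that at each index $n$ where the upper inequality holds, monotonicity forces $a_1\le\cdots\le a_n\le e^{(1+\epsilon)\psi(n)}$, so an infinitely-often bound on a single partial quotient propagates to a uniform bound on the entire prefix; this is what replaces the unbounded tails of the unconstrained problem by a bona fide counting problem.

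For the upper bound in part (i) I write $\psi(n)\sim\alpha\log n$, so $e^{(1\pm\epsilon)\psi(n)}\approx n^{(1\pm\epsilon)\alpha}$, and set $M=n^{(1+\epsilon)\alpha}$. For each level $n\ge N$ I take all cylinders $I_n(a_1,\dots,a_n)$ with $i^{(1-\epsilon)\alpha}\le a_i$ and $a_1\le\cdots\le a_n\le M$; since every point of $E_{\inf}(\Lambda,\psi)$ has infinitely many indices at which the upper bound holds, every such point is caught at some level $n\ge N$, so this is a genuine cover whose mesh tends to $0$ as $N\to\infty$. Bounding the $s$-sum crudely by (number of admissible prefixes)$\,\times\,$(largest term), i.e. by $\binom{n+M-1}{n}\cdot\bigl(\prod_{i\le n}i^{(1-\epsilon)\alpha}\bigr)^{-2s}$, and using $\log\binom{n+M-1}{n}\approx((1+\epsilon)\alpha-1)\,n\log n$ for $\alpha>1$ against $\log\prod_{i\le n}i^{(1-\epsilon)\alpha}\approx(1-\epsilon)\alpha\,n\log n$, the logarithm of the sum is $\approx[(\alpha-1)-2s\alpha]\,n\log n$ after letting $\epsilon\to0$. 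This tends to $-\infty$ exactly when $s>\frac{\alpha-1}{2\alpha}$, yielding $\dim_{\rm H}E_{\inf}(\Lambda,\psi)\le\frac{\alpha-1}{2\alpha}$; when $\alpha\le1$ the binomial contributes only $o(n\log n)$, so the sum vanishes for every $s>0$ and the dimension is $0$.

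For the lower bound in part (i) with $\alpha>1$ I build a Cantor subset of $E(\psi)\cap\Lambda\subseteq E_{\inf}(\Lambda,\psi)$ in which $a_n$ ranges freely over the integers in the block $[e^{\psi(n)},e^{\psi(n+1)})\approx[n^\alpha,(n+1)^\alpha)$. Because these blocks are increasing and disjoint, every admissible string is automatically non-decreasing, and $\log a_n/\psi(n)\to1$ places the string in $E(\psi)$; the number of free choices at level $n$ is $\approx\alpha n^{\alpha-1}$, which exceeds $1$ precisely when $\alpha>1$. Endowing the set with the uniform product measure $\mu$, estimating $\mu(I_n)$ and the gaps between neighbouring cylinders in the usual way, and applying the mass distribution principle (see \cite{Fal90}), the local Hölder exponent works out to $\liminf_n\frac{\sum_{i\le n}\log(\alpha i^{\alpha-1})}{2\sum_{i\le n}\log i^{\alpha}}=\frac{\alpha-1}{2\alpha}$, matching the upper bound; for $\alpha\le1$ the lower bound $0$ is trivial.

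Part (ii) is easy on the upper side, since $E_{\inf}(\Lambda,\psi)\subseteq E_{\inf}(\psi)$ gives $\dim_{\rm H}E_{\inf}(\Lambda,\psi)\le\frac{1}{B+1}$ directly from Theorem \ref{RAMA}(iv). For the matching lower bound I would realize the extremal $\frac{1}{B+1}$-construction of the unconstrained problem inside $\Lambda$: along a subsequence witnessing $\limsup_n\log\psi(n)/n=\log B$ the partial quotients are forced to grow doubly-exponentially and hence are automatically increasing, while on the complementary indices one keeps them non-decreasing with as much freedom as the constraints (monotonicity and $\liminf\log a_n/\psi(n)=1$) permit, so that no dimension is lost. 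The hard part throughout is the lower bound: one must design the free ranges — respectively the interpolation between the special indices in (ii) — so as to simultaneously keep the string non-decreasing, pin the liminf to exactly $1$, and lose no dimension, and then control both the measure of cylinders and their mutual spacing finely enough for the mass distribution principle. The borderline behaviour near $\alpha=1$, where the per-step freedom $\alpha n^{\alpha-1}$ degenerates and the separation estimates become delicate, is the most demanding point.
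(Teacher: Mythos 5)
First, a point of orientation: this paper does not prove Theorem \ref{inf} itself (it is quoted from \cite{FMSW21}), but the toolkit it would be proved with is exactly the one assembled here, and your proposal can be measured against that. Your part (i) and the upper bound in part (ii) are essentially sound and follow that same route. The upper bound in (i) is the counting of non-decreasing strings below $M=n^{(1+\epsilon)\alpha}$ (Lemma \ref{card}) against the length estimate \eqref{length}, organized as in Lemma \ref{sj}; the lower bound in (i) is precisely Lemma \ref{fmsw} applied to $t_n=2\lfloor n^{\alpha-1}\rfloor$, and your ``uniform measure plus gap estimates plus mass distribution principle'' sketch is exactly the nontrivial content of that lemma, so it should be invoked rather than re-derived in one sentence. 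Two small repairs are needed: (a) for $0\le\alpha<1$, and in particular $\alpha=0$, your length factor $\bigl(\prod_{i\le n} i^{(1-\epsilon)\alpha}\bigr)^{-2s}$ degenerates (it equals $1$ when $\alpha=0$), so the $s$-sum does \emph{not} vanish as claimed; one must switch to the uniform bound \eqref{2cd}, $|I_k|\le 20\bigl((1+\sqrt5)/2\bigr)^{-2k}$, against which the $e^{o(n)}$-sized count suffices; (b) the condition $a_j\ge j^{(1-\epsilon)\alpha}$ holds only from some index $N(x)$ on, so the covering must be run on the pieces of a countable decomposition over $N$, as in \eqref{bhgx}.

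The genuine gap is the lower bound in part (ii). Your plan --- ``on the complementary indices one keeps them non-decreasing with as much freedom as the constraints permit, so that no dimension is lost'' --- has no construction behind it, and its natural reading provably fails. Take $\psi(n)=e^{n}$ when $n=2^j$ and $\psi(n)=\log^2 n$ otherwise, so that $\log B=1$ by \eqref{Binf}. Any $x\in E_{\inf}(\Lambda,\psi)$ must satisfy $\log a_{2^j}(x)\ge(1-\epsilon)e^{2^j}$ for all large $j$, and monotonicity then makes $a_n(x)$ astronomically larger than $e^{\psi(n)}$ at every non-spike index, so the infinitely-often upper bound pinning the liminf can only be realized at the spikes. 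The ``maximal freedom'' choice --- hold $a_n$ at the running maximum $\exp(\max_{k\le n}\psi(k))$ between spikes and jump at spikes --- fed into Lemma \ref{fmsw} gives $\xi\ge\limsup_j e^{2^{j+1}}/(2^j e^{2^j})=\infty$, hence only the trivial bound $\dim_{\rm H}\ge0$, not $1/(e+1)$. What is actually required is a monotone sequence that \emph{anticipates} the spikes and interpolates geometrically at log-ratio governed by $B$: in the example, $\log t_n\approx e^{2^j}e^{\,n-2^j}$ on $[2^j,2^{j+1})$, which yields $\xi=e-1=B-1$ and hence the correct bound $1/(B+1)$; in general, a sequence $\{d_n\}$ constrained by $\log d_{n+1}\le(B-1+\epsilon)\sum_{k\le n}\log d_k$, for which one must verify \emph{both} that $\xi\le B-1+\epsilon$ \emph{and} that, using the definition \eqref{Binf} of $B$, such a capped sequence can still reach level $e^{\psi(n)}$ at the required times so that the constructed set really lies in $E_{\inf}(\Lambda,\psi)$ (compare the $\{d_n\}$ construction with $\theta(n)=\min_{k\ge n}\psi(k)$ in Section 3.1.2 of this paper for the $E_{\sup}$ analogue). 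This calibration is the heart of the proof of part (ii) and is absent from your proposal; without it the matching lower bound is unproved.
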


In this paper, we investigate the Hausdorff dimension of
$$E_{\sup}(\Lambda,\psi):= E_{\sup}(\psi) \cap\Lambda\quad\text{and}\quad E(\Lambda,\psi):= E(\psi) \cap\Lambda.$$
Our main result is as follows.
\begin{theorem}\label{suplim}
Let $\psi:\mathbb{N}\rightarrow\mathbb{R}^+$ be a function satisfying $\psi(n)\to\infty$ as $n\to\infty$.
\begin{enumerate}[(i)]
\item If $\psi(n)/\log n\to \alpha\ (0\leq\alpha<\infty)$ as $n\to\infty$, then
\begin{equation*}
\dim_{\rm H}E_{\sup}(\Lambda,\psi)= \dim_{\rm H}E(\Lambda,\psi)=
\left\{
  \begin{array}{ll}
    0, & \hbox{$0\leq\alpha<1$;} \\
    \frac{\alpha-1}{2\alpha}, & \hbox{$\alpha \geq 1$.}
  \end{array}
\right.
\end{equation*}
\item If $\psi(n)/\log n\to\infty$ as $n\to\infty$, then
\[\dim_{\rm H}E_{\sup}(\Lambda,\psi)=\frac{1}{A+1},\]
where $A$ is given by \eqref{Asup}, and either $E(\Lambda,\psi)=\emptyset$ or
\[
\dim_{\rm H}E(\Lambda,\psi)=\frac{1}{C+1},
\]
where $C$ is given by \eqref{Clim}.
\end{enumerate}
\end{theorem}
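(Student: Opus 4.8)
The plan is to prove, for each set and each regime, matching upper and lower bounds, using two structural features of $\Lambda$. First, a bounded non-decreasing sequence of positive integers is eventually constant, so on each of the sets below the points with $a_n(x)\not\to\infty$ form a countable set that contributes nothing to the dimension. Second, the monotonicity $a_1\le\cdots\le a_n$ correlates the partial quotients and, through the elementary count of non-decreasing tuples, produces a decisive $1/n!$ saving that is absent in the unconstrained theory. Throughout I use $|I(a_1,\ldots,a_n)|\asymp q_n^{-2}$ and $\log q_n\asymp\sum_{i=1}^n\log a_i$, together with the inclusions $E(\Lambda,\psi)\subseteq E_{\sup}(\Lambda,\psi)$, $E(\Lambda,\psi)\subseteq E(\psi)$ and $E_{\sup}(\Lambda,\psi)\subseteq E_{\sup}(\psi)$, which let me import the sharp unconstrained values from Theorem \ref{RAMA} whenever they coincide with the answer.

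For part (i) it suffices, by $E(\Lambda,\psi)\subseteq E_{\sup}(\Lambda,\psi)$, to bound $\dim_{\rm H}E_{\sup}(\Lambda,\psi)$ from above and $\dim_{\rm H}E(\Lambda,\psi)$ from below by the stated value. For the upper bound I fix $\epsilon>0$; since $\psi(n)/\log n\to\alpha$, the limsup condition forces $a_n(x)\le n^{\beta}$ eventually with $\beta=(1+\epsilon)(\alpha+\epsilon)$, so that $E_{\sup}(\Lambda,\psi)$ lies, up to a countable set, in $G_\beta=\{x\in\Lambda:a_n(x)\le n^\beta\text{ eventually}\}$; here monotonicity is precisely what lets the ceiling $a_n\le n^\beta$ alone (with no lower restriction) cap the dimension, because within $\Lambda$ faster growth means larger dimension. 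Covering $G_\beta$ at level $n$ by the admissible cylinders and estimating $\sum\prod_{i=1}^n a_i^{-2s}\asymp e^{n\log n\,(\beta(1-2s)-1)}$, where the $-1$ is exactly the $1/n!$ saving, shows this sum tends to $0$ for $s>\frac{\beta-1}{2\beta}$; hence $\dim_{\rm H}G_\beta\le\frac{\beta-1}{2\beta}$, and $\epsilon\to0$ finishes. For the lower bound I build a Cantor subset of $E(\Lambda,\psi)$ in which $a_n$ is non-decreasing, $\log a_n/\psi(n)\to1$ (so the genuine limit is $1$ and the point lies in $E(\psi)\cap\Lambda$), and $a_n$ ranges over an interval of length comparable to the typical increment $n^{\alpha-1}$; the tree then has $\approx e^{(\alpha-1)n\log n}$ branches at level $n$ and cylinders of length $\approx e^{-2\alpha n\log n}$, and since $\psi$ grows only logarithmically the intermediate (sub-cylinder) scales are harmless, so the mass distribution principle yields local dimension $\frac{\alpha-1}{2\alpha}$. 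The subcase $\alpha<1$ is identical, both bounds collapsing to $0$.

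In part (ii) the upper bounds are inherited. For $E(\Lambda,\psi)$, the inclusion into $E(\psi)$ and Theorem \ref{RAMA}(v) give $\dim_{\rm H}E(\Lambda,\psi)\le\frac{1}{C+1}$ at once; for $E_{\sup}(\Lambda,\psi)$, either $A=1$, whence the trivial bound $\dim_{\rm H}E_{\sup}(\Lambda,\psi)\le\dim_{\rm H}\Lambda=\frac{1}{2}=\frac{1}{A+1}$, or $A>1$, in which case $\psi(n)/n\to\infty$ and Theorem \ref{RAMA}(iii) gives $\dim_{\rm H}E_{\sup}(\Lambda,\psi)\le\dim_{\rm H}E_{\sup}(\psi)=\frac{1}{A+1}$. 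The emptiness dichotomy for $E(\Lambda,\psi)$ is settled by a lemma: a non-decreasing integer sequence with $\log a_n\sim\psi(n)$ exists if and only if $\psi(n)/\inf_{m\ge n}\psi(m)\to1$, and in that case tracking the envelope $\inf_{m\ge n}\psi(m)$ gives, by a Cantor construction whose binding scale is now the intermediate one (so the computed dimension is $\frac{1}{C+1}$ rather than $\frac{1}{2}$), the matching lower bound. The substance of part (ii) is the lower bound $\dim_{\rm H}E_{\sup}(\Lambda,\psi)\ge\frac{1}{A+1}$: I select a subsequence $n_k$ along which $\psi$ equals its future infimum and realizes the liminf in \eqref{Asup}, force $a_{n_k}\approx e^{\psi(n_k)}$ there (which both respects the monotone ceiling and makes the limsup equal $1$), keep the partial quotients flat in between, and read off the dimension $\frac{1}{A+1}$ at the intermediate scale.

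The main obstacle is twofold, according to which direction is nontrivial in each regime. In part (i) it is the upper bound for $E_{\sup}(\Lambda,\psi)$: this is the largest set under study, carrying no lower constraint on the partial quotients, and the whole argument rests on converting monotonicity into the $1/n!$ saving in the constrained sum $\sum_{a_1\le\cdots\le a_n\le n^\beta}\prod a_i^{-2s}$ and keeping that saving sharp as $\beta\downarrow\alpha$ (and checking that it degrades to give dimension $0$ once $\beta\le1$). In part (ii) it is instead the lower bounds, where the relevant local dimension is dictated not by the cylinders themselves but by intermediate balls that swallow an entire fan of level-$n$ cylinders; forcing these constructions to simultaneously respect the monotone ceiling $a_n\le e^{(1+\epsilon)\inf_{m\ge n}\psi(m)}$ and achieve $\limsup_n\log a_n/\psi(n)=1$ is what compels the passage to the future infimum of $\psi$, a phenomenon with no analogue in the unconstrained theory and the source of the emptiness alternative.
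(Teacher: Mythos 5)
Your part (i) lower bound, your emptiness criterion (equivalent to the paper's Lemma \ref{limnon-empty}), and your part (ii) upper bounds are sound; indeed your route to the upper bound for $E_{\sup}(\Lambda,\psi)$ in part (ii) --- Theorem \ref{RAMA}(iii) when $A>1$ (since $A>1$ forces $\psi(n)/n\to\infty$) and the trivial bound $\dim_{\rm H}\Lambda=1/2$ when $A=1$ --- is a legitimate shortcut past the paper's Lemma \ref{varphi} and {\L}uczak's theorem. However, the two steps you yourself identify as the crux both contain genuine gaps.

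First, the part (i) upper bound. Your estimate $\sum_{1\le a_1\le\cdots\le a_n\le n^\beta}\prod_{i=1}^n a_i^{-2s}\asymp e^{n\log n(\beta(1-2s)-1)}$ is false: the single tuple $(1,1,\cdots,1)$ already contributes $1$, so this sum is bounded below by $1$ for every $n$ and never tends to $0$. The ``$1/n!$ saving'' is a saving on the \emph{number} of non-decreasing tuples; it transfers to the weighted covering sum only when every admissible cylinder is uniformly small, i.e.\ only in the presence of a pointwise \emph{lower} bound on the $a_i$, which your set $G_\beta$ does not impose (and cannot, since $G_\beta$ contains all points of $\Lambda$ with slowly growing quotients). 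This is precisely why the paper does not cover $\{x\in\Lambda:a_n(x)\le n^\beta \text{ eventually}\}$ in one shot, but stratifies by the value of $\liminf_n\log a_n(x)/\log n$: Lemma \ref{sj} treats bands $\mathbb{E}(\Lambda,\alpha_1,\alpha_2)$ in which $a_j\ge j^{\alpha_1-\varepsilon}$ eventually (giving $|I_k|\le (k!)^{-2(\alpha_1-\varepsilon)}$) while $a_k\le k^{\alpha_2+\varepsilon}$ infinitely often (giving the count $\lesssim (k!)^{\alpha_2+\varepsilon-1}e^{O(k)}$), the stratum $\{\liminf<1\}$ is killed separately by playing a subexponential count against the universal bound \eqref{2cd}, and finitely many thin bands then yield $\frac{\alpha-1}{2\alpha}$ (Lemma \ref{infles}). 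Your inclusion of $E_{\sup}(\Lambda,\psi)$ into $G_\beta$ is correct; it is the dimension bound for $G_\beta$ that cannot be obtained by your covering sum.

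Second, the part (ii) lower bound for $E_{\sup}(\Lambda,\psi)$: ``keep the partial quotients flat in between'' fails. Take $\psi(n)=2^{4^k}$ for $4^k\le n<4^{k+1}$, so that $A=2^{1/4}$ and the theorem predicts dimension $1/(1+2^{1/4})>0$. Membership in $E_{\sup}(\Lambda,\psi)$ requires $\log a_m\ge(1-\varepsilon)\psi(m)$ for infinitely many $m$; here successive plateau values of $\psi$ are fourth powers of each other and $m\le 4\log_2\psi(m)$, so with flat filling the accumulated history $\sum_{j<m}\log a_j\lesssim m\,\psi(m)^{1/4}$ is negligible against the next forced value $\psi(m)$. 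In the notation of Lemma \ref{fmsw} this gives $\xi=\infty$, hence your Cantor set has dimension $0$ --- and no choice of forcing subsequence avoids this. The missing device, which is the heart of the paper's proof (imported from \cite{FMS20}), is to fill the gaps with values growing geometrically but capped by the history: $d_n=\min\bigl\{\exp(\theta(n)),\prod_{k<n}d_k^{A-1+\varepsilon}\bigr\}$ with $\theta(n)=\min_{k\ge n}\psi(k)$. The cap forces $\log d_{n+2}\le(A-1+\varepsilon)(\log d_2+\cdots+\log d_{n+1})$, hence $\xi\le A-1+\varepsilon$; the min with $\exp(\theta(n))$ keeps the monotone ceiling; and the cap is inactive infinitely often, so $\limsup_n\log d_n/\psi(n)=1$ still holds. (Your $1/(C+1)$ sketch is less affected, since in $E(\Lambda,\psi)$ every quotient is forced and the paper's choice $t_n=\lfloor\exp(\psi(n)+1)\rfloor$, after replacing $\psi$ by an equivalent non-decreasing function, needs no cap.)
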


\noindent In the case of $\psi(n)=\alpha \log n$ with $0<\alpha <\infty$, by comparing Theorems \ref{inf} and \ref{suplim} with Theorem \ref{RAMA}, we observe that
$$\dim_{\rm H}E_{\sup}(\psi)=1,\quad \text{and}\quad  \dim_{\rm H}E_{\inf}(\psi)=\dim_{\rm H}E(\psi)=1/2;$$
while the sets $E_{\sup}(\Lambda,\psi)$, $E_{\inf}(\Lambda,\psi)$ and $E(\Lambda,\psi)$ display a multifractal phenomenon.

\indent The paper is organized as follows. In Section 2, we first present some elementary properties and then collect some useful lemmas for computing the Hausdorff dimension of some sets in continued fractions. Section 3 is devoted to the proofs of main results.

\section{Preliminaries}

\subsection{Elementary properties of continued fractions}
 For any $n\geq1$ and $(a_1,\cdots,a_n)\in\mathbb{N}^{n}$, we call
\begin{equation*}
I_{n}(a_1, \cdots, a_n): =\big\{x\in(0,1):\ a_1(x)=a_1, \cdots, a_n(x)=a_n\big\}
\end{equation*}
a \emph{cylinder} of order $n$ associated to $(a_1,\cdots,a_n)$. Denote the $n$-th convergent of the continued fraction expansion of $x$ by
\begin{equation}\label{xnwb}
\frac{p_n(x)}{q_n(x)}:=[a_1(x),a_2(x),\cdots,a_n(x)],
\end{equation}
where $p_n(x)$ and $q_n(x)$ are positive and coprime.
Notice that all points in $I_{n}(a_1, \cdots, a_n)$ have the same $p_n(x)$ and $q_n(x)$. Thus we write
\begin{equation*}
p_n(a_1,\cdots,a_n)=p_n=p_n(x)\ \text{and}\ q_n(a_1,\cdots,a_n)=q_n=q_n(x)
\end{equation*}
for $x\in I_{n}(a_1, \cdots, a_n)$.
It is well known (see \cite[p. 4]{Khi64}) that $p_n$ and $q_n$ satisfy the following recursive formula:
\begin{equation}\label{ppqq}
\begin{cases}
p_n=a_np_{n-1}+p_{n-2};\cr
q_n=a_nq_{n-1}+q_{n-2},
\end{cases}
\end{equation}
with the conventions $p_{-1}\equiv1, p_0\equiv0$ and $q_{-1}\equiv0, q_0\equiv1$. Consequently, $q_n\geq q_{n-1}+q_{n-2}$, and so
\begin{equation}\label{fn}
q_n\geq\frac{1}{\sqrt{5}}\left(\frac{1+\sqrt{5}}{2}\right)^{n}-
\frac{1}{\sqrt{5}}\left(\frac{1-\sqrt{5}}{2}\right)^{n}\geq\frac{1}{2\sqrt{5}}\left(\frac{1+\sqrt{5}}{2}\right)^{n}.
\end{equation}

\begin{prop}[{\cite[p. 18]{IK02}}]\label{cd}
For any $(a_1,\cdots, a_n)\in\mathbb{N}^{n}$, the cylinder $I_{n}(a_1,\cdots, a_n)$ is the interval with the endpoints
$p_n/q_n$ and $(p_n+p_{n-1})/(q_n+q_{n-1})$. As a result, the length of $I_{n}(a_1, \cdots, a_n)$ equals to
\begin{equation*}
|I_{n}(a_1, \cdots, a_n)|=\frac{1}{q_n(q_n+q_{n-1})}.
\end{equation*}
\end{prop}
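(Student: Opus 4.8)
The plan is to realize the cylinder as the image of a half-line under the Möbius transformation attached to the convergents $p_n/q_n$ and $p_{n-1}/q_{n-1}$, and then read off both the endpoints and the length from two elementary continued-fraction identities.

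First I would establish the fundamental parametrization identity: for any real $z\geq1$,
\[
[a_1,a_2,\cdots,a_n,z]=\frac{z\,p_n+p_{n-1}}{z\,q_n+q_{n-1}},
\]
where $p_n,q_n$ are defined by the recursion \eqref{ppqq}. This is proved by induction on $n$: the case $n=1$ is a direct computation, and the inductive step follows by writing $[a_1,\cdots,a_n,z]=[a_1,\cdots,a_{n-1},w]$ with $w=a_n+1/z$ and applying \eqref{ppqq}. Alongside this, I would record the determinant identity
\[
p_nq_{n-1}-p_{n-1}q_n=(-1)^{n-1},
\]
also an immediate induction from \eqref{ppqq}: the base case $p_1q_0-p_0q_1=1$ holds, and in the inductive step the $a_{n}$-terms cancel, flipping the sign at each stage. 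These two identities are the only algebraic inputs.

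Next I would characterize the points of the cylinder. For an irrational $x\in I_n(a_1,\cdots,a_n)$ the first $n$ partial quotients are prescribed while the complete quotient $z:=[a_{n+1}(x),a_{n+2}(x),\cdots]$ ranges over $(1,\infty)$, so that $x=f(z)$ with $f(z)=\frac{z\,p_n+p_{n-1}}{z\,q_n+q_{n-1}}$. Differentiating and using the determinant identity gives
\[
f'(z)=\frac{p_nq_{n-1}-p_{n-1}q_n}{(z\,q_n+q_{n-1})^2}=\frac{(-1)^{n-1}}{(z\,q_n+q_{n-1})^2},
\]
so $f$ is strictly monotone on $[1,\infty)$ and its image is the interval between $f(1)=(p_n+p_{n-1})/(q_n+q_{n-1})$ and $\lim_{z\to\infty}f(z)=p_n/q_n$. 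This is precisely the asserted pair of endpoints (the sign of $f'$, hence the parity of $n$, also dictates which endpoint lies to the left), and passing to the closure accounts for the endpoints themselves.

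Finally, the length follows directly from the two endpoints:
\[
|I_n(a_1,\cdots,a_n)|=\left|\frac{p_n}{q_n}-\frac{p_n+p_{n-1}}{q_n+q_{n-1}}\right|
=\frac{|p_nq_{n-1}-p_{n-1}q_n|}{q_n(q_n+q_{n-1})}=\frac{1}{q_n(q_n+q_{n-1})},
\]
again invoking the determinant identity. I expect the only real obstacle to be the bookkeeping in the characterization step: verifying that the complete quotient $z$ sweeps out all of $(1,\infty)$ and no more, and clarifying the inessential issue of whether the boundary points $p_n/q_n$ and $(p_n+p_{n-1})/(q_n+q_{n-1})$ belong to the cylinder (the value $z=1$ corresponds to terminating, i.e.\ rational, expansions). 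Since this ambiguity lives on a measure-zero boundary, it does not affect the length computation, and the closed-interval description holds as stated.
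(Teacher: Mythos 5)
Your proof is correct and is the standard argument; the paper states this proposition without proof, citing \cite[p. 18]{IK02}, where essentially your M\"obius-parametrization proof (the identity $[a_1,\cdots,a_n,z]=\frac{zp_n+p_{n-1}}{zq_n+q_{n-1}}$, the determinant identity, monotonicity in $z$, and the endpoint subtraction) is the canonical route. One notational nit: under the paper's convention \eqref{cfe} the quantity $[a_{n+1}(x),a_{n+2}(x),\cdots]$ lies in $(0,1)$, so the complete quotient you need is its reciprocal $a_{n+1}(x)+[a_{n+2}(x),a_{n+3}(x),\cdots]$ --- your subsequent (correct) use of $z\in(1,\infty)$ makes clear this is what you intended.
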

Combining the second of formula \eqref{ppqq}, \eqref{fn} and Proposition \ref{cd}, we deduce that
 \begin{equation}\label{2cd}
 |I_{n}(a_1, \cdots, a_n)| \leq\frac{1}{q^{2}_n}
 \leq20\left(\frac{1+\sqrt{5}}{2}\right)^{-2n}
 \end{equation}
and
\begin{equation}\label{length}
\left(2^n\prod^n_{k=1}a_k\right)^{-2} \leq |I_n(a_1, \cdots, a_n)| \leq \left(\prod^n_{k=1}a_k\right)^{-2}.
\end{equation}

\subsection{Some useful lemmas}
The first lemma is a combinatorial formula on the cardinality (i.e., $\sharp$) of finite sets.
\begin{lemma}[{\cite[Lemma 4.3]{FWS18}}]\label{card}
For positive integers $\ell$ and $n$, let
\[
D(\ell,n):=\big\{(a_1,\cdots,a_n)\in\mathbb{N}^{n}: 1\leq a_1\leq\cdots \leq a_n\leq\ell\big\}.
\]
Then
 \[\sharp D(\ell,n)=\frac{(n+\ell-1)!}{n!(\ell-1)!}.\]
\end{lemma}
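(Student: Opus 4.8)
The plan is to prove the formula by exhibiting an explicit bijection between $D(\ell,n)$ and the family of $n$-element subsets of $\{1,2,\ldots,\ell+n-1\}$, whose cardinality is the binomial coefficient $\binom{\ell+n-1}{n}=\frac{(n+\ell-1)!}{n!(\ell-1)!}$. First I would define the shift map $\Phi$ by $\Phi(a_1,\ldots,a_n)=(b_1,\ldots,b_n)$ with $b_i:=a_i+(i-1)$. The monotonicity $a_1\leq\cdots\leq a_n$ translates into the strict inequalities $b_1<b_2<\cdots<b_n$, since $b_{i+1}-b_i=(a_{i+1}-a_i)+1\geq1$; moreover $b_1=a_1\geq1$ and $b_n=a_n+(n-1)\leq\ell+n-1$. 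Hence $\Phi$ sends a non-decreasing tuple with values in $\{1,\ldots,\ell\}$ to a strictly increasing tuple with values in $\{1,\ldots,\ell+n-1\}$, equivalently to an $n$-subset of $\{1,\ldots,\ell+n-1\}$.

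Next I would check that $\Phi$ is a bijection by writing down its inverse: given a strictly increasing tuple $1\leq b_1<\cdots<b_n\leq\ell+n-1$, set $a_i:=b_i-(i-1)$. Because $b_1<\cdots<b_i$ are distinct positive integers we have $b_i\geq i$, so $a_i\geq1$; the relation $a_{i+1}-a_i=(b_{i+1}-b_i)-1\geq0$ gives $a_1\leq\cdots\leq a_n$; and $a_n=b_n-(n-1)\leq\ell$. Thus this map lands in $D(\ell,n)$ and is inverse to $\Phi$ by construction. Counting the $n$-subsets of an $(\ell+n-1)$-element set then yields $\sharp D(\ell,n)=\binom{\ell+n-1}{n}=\frac{(n+\ell-1)!}{n!(\ell-1)!}$, as claimed.

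Alternatively, one could argue by induction using the Pascal-type recurrence $\sharp D(\ell,n)=\sharp D(\ell-1,n)+\sharp D(\ell,n-1)$, obtained by splitting $D(\ell,n)$ according to whether $a_n\leq\ell-1$ (the whole tuple then lies in $D(\ell-1,n)$) or $a_n=\ell$ (the truncated tuple $(a_1,\ldots,a_{n-1})$ then lies in $D(\ell,n-1)$), together with the base cases $\sharp D(1,n)=1$ and $\sharp D(\ell,1)=\ell$; this matches the recurrence $\binom{n+\ell-1}{n}=\binom{n+\ell-2}{n}+\binom{n+\ell-2}{n-1}$. Since the statement is purely combinatorial, there is no serious obstacle; the only point requiring a little care is verifying that the shift map and its inverse are genuinely well defined and mutually inverse — in particular that the strict inequalities $b_1<\cdots<b_n$ correspond exactly to the non-strict monotonicity $a_1\leq\cdots\leq a_n$, and that the endpoint bounds $b_n\leq\ell+n-1$ and $a_n\leq\ell$ are compatible under the correspondence.
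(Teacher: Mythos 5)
Your proof is correct and complete. Note that the paper itself gives no proof of this lemma --- it is imported by citation from \cite[Lemma 4.3]{FWS18} --- so there is no internal argument to compare against; your write-up actually supplies what the paper omits. The bijection $b_i := a_i + (i-1)$ between non-decreasing tuples in $\{1,\ldots,\ell\}^n$ and $n$-subsets of $\{1,\ldots,\ell+n-1\}$ is the standard stars-and-bars argument, and you verify all the points that matter: strict monotonicity of the $b_i$, the endpoint bound $b_n \leq \ell+n-1$, well-definedness of the inverse (in particular $b_i \geq i$ giving $a_i \geq 1$), and the identification $\binom{\ell+n-1}{n} = \frac{(n+\ell-1)!}{n!(\ell-1)!}$. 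The alternative inductive route via the recurrence $\sharp D(\ell,n) = \sharp D(\ell-1,n) + \sharp D(\ell,n-1)$ with base cases $\sharp D(1,n)=1$ and $\sharp D(\ell,1)=\ell$ is equally valid; either argument alone would suffice.
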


The second lemma established in \cite{FMSW21} provides a method to obtain a lower bound of the Hausdorff dimension of some sets of continued fractions for which their partial quotients are non-decreasing, see Liao and Rams \cite[Lemma 2.3]{LR21} for general results.

\begin{lemma}[{\cite[Lemma 3.4]{FMSW21}}]\label{fmsw}
Let $\{t_n\}$ be a sequence of positive integers tending to infinity with $t_n\geq2$ for all $n\geq1$. Write
\[
\mathbb{E}(\{t_n\}):=\big\{x\in(0,1): nt_n\leq a_n(x)<(n+1)t_n, \forall\,n\geq1\big\}.
\]
Then
\[
\dim_{\rm H}\mathbb{E}(\{t_n\})=\frac{1}{2+\xi},
\]
where $\xi \in [0,\infty]$ is given by
\[
\xi:=\limsup\limits_{n\to\infty}\frac{2\log(n+1)!+\log t_{n+1}}{\log(t_1 t_2\cdots t_n)}.
\]
\end{lemma}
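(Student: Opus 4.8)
The plan is to realize $\mathbb{E}(\{t_n\})$ as a Cantor-type set built from cylinders and to compute its dimension by pairing a covering argument (upper bound) with the mass distribution principle (lower bound), with the whole difficulty concentrated at one intermediate scale. Throughout, for an admissible word $(a_1,\dots,a_n)$ --- meaning $kt_k\le a_k<(k+1)t_k$ for each $k\le n$ --- I would first record that $a_k\asymp kt_k$, so by \eqref{length} and \eqref{fn},
\[
\log q_n=\log(n!)+\log(t_1\cdots t_n)+O(n),
\]
uniformly over admissible words. Since $t_n\to\infty$ forces $\tfrac1n\log(t_1\cdots t_n)\to\infty$, the $O(n)$ errors (coming from the factor $2^n$ in \eqref{length} and from $kt_k\le a_k<(k+1)t_k$) are negligible against both $\log(n!)$ and $\log(t_1\cdots t_n)$; this is exactly where the hypothesis $t_n\to\infty$ enters.

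The central geometric object is the block
\[
B_n(a_1,\dots,a_n):=\bigcup_{m=(n+1)t_{n+1}}^{(n+2)t_{n+1}-1}I_{n+1}(a_1,\dots,a_n,m)\subseteq I_n(a_1,\dots,a_n),
\]
which contains $\mathbb{E}(\{t_n\})\cap I_n(a_1,\dots,a_n)$. Using Proposition \ref{cd} together with $q_{n+1}=mq_n+q_{n-1}\asymp mq_n$, the set $\{x\in I_n:a_{n+1}(x)\ge M\}=\bigcup_{m\ge M}I_{n+1}(\dots,m)$ is an interval of length $\asymp(Mq_n^2)^{-1}$; subtracting the values $M=(n+1)t_{n+1}$ and $M=(n+2)t_{n+1}$ yields
\[
|B_n|\asymp\frac{1}{(n+1)^2\,t_{n+1}\,q_n^{2}},\qquad \log|B_n|=-2\log\big((n+1)!\big)-2\log(t_1\cdots t_n)-\log t_{n+1}+O(n).
\]
For the upper bound I would cover $\mathbb{E}(\{t_n\})$ by the $\prod_{k\le n}t_k$ blocks $B_n$. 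Writing $R_n:=\big(2\log((n+1)!)+\log t_{n+1}\big)/\log(t_1\cdots t_n)$, so that $\limsup_n R_n=\xi$, the $s$-content of this cover satisfies
\[
\log\Big(\sum_{(a_1,\dots,a_n)}|B_n|^{s}\Big)=\log(t_1\cdots t_n)\Big[(1-2s)-sR_n\Big]+O(n).
\]
Given $s>\tfrac1{2+\xi}$ one has $\tfrac1s-2<\xi$, hence $R_n>\tfrac1s-2$ for infinitely many $n$, for which the bracket is bounded above by a negative constant; letting $n\to\infty$ along such indices drives the $s$-content to $0$ while the mesh tends to $0$, so $\mathcal H^{s}(\mathbb{E}(\{t_n\}))=0$ and $\dim_{\rm H}\mathbb{E}(\{t_n\})\le\tfrac1{2+\xi}$.

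For the lower bound I would put on $\mathbb{E}(\{t_n\})$ the probability measure $\mu$ splitting mass uniformly among the $t_k$ admissible choices at each step, i.e. $\mu(I_n(a_1,\dots,a_n))=\prod_{k\le n}t_k^{-1}$, and apply the mass distribution principle (see \cite{Fal90}) after estimating $\mu(B(x,r))$ at every scale. The relevant scales are the cylinder scale $r\asymp|I_{n+1}|$ and the block scale $r\asymp|B_n|$, with $|I_{n+1}|\asymp|B_n|/t_{n+1}$, and the key computation is that at the block scale $\mu(B(x,r))\asymp\mu(I_n)=\prod_{k\le n}t_k^{-1}$, whence
\[
\frac{\log\mu(B(x,r))}{\log r}\approx\frac{\log(t_1\cdots t_n)}{2\log((n+1)!)+2\log(t_1\cdots t_n)+\log t_{n+1}}=\frac{1}{2+R_n}.
\]
I would then verify that the block scale is the worst one: for $|B_n|\le r<|I_n|$ the ball meets only $O(1)$ sibling blocks, so $\mu(B(x,r))\asymp\mu(I_n)$ while $|\log r|\le|\log|B_n||$, which only improves the ratio; and for $|I_{n+1}|\le r<|B_n|$ the ball meets $\asymp r/|I_{n+1}|$ of the $t_{n+1}$ equal-measure children, giving $\mu(B(x,r))\lesssim r\,\mu(I_n)/(t_{n+1}|I_{n+1}|)$, linear in $r$, so $\mu(B(x,r))/r^{s}$ is largest at $r=|B_n|$ and again reduces to the block-scale estimate. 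Thus $\liminf_{r\to0}\log\mu(B(x,r))/\log r=\liminf_n(2+R_n)^{-1}=(2+\xi)^{-1}$ for every $x$, and the mass distribution principle gives $\dim_{\rm H}\mathbb{E}(\{t_n\})\ge\tfrac1{2+\xi}$.

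The main obstacle is precisely this scale analysis for the lower bound: the correct local dimension is not attained at the cylinder scale (which would produce $2\log((n+1)!)/\log(t_1\cdots t_{n+1})$ in place of $R_n$) but at the coarser block scale, where the extra $\log t_{n+1}$ and the denominator $\log(t_1\cdots t_n)$ --- rather than $\log(t_1\cdots t_{n+1})$ --- appear together to form exactly $\xi$. Checking that no intermediate scale beats the block scale, and that the $O(n)$ corrections are absorbed by $t_n\to\infty$, is what makes the constant come out to $\tfrac1{2+\xi}$; the degenerate cases $\xi=\infty$ (dimension $0$) and $\xi=0$ (dimension $\tfrac12$) then follow by specializing the same estimates.
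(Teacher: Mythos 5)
The paper itself contains no proof of this lemma: it is imported verbatim as \cite[Lemma 3.4]{FMSW21}, so there is no in-paper argument to compare with; measured against the standard proof of such statements (as in \cite{FMSW21}, following the scheme of Liao and Rams \cite{LR21}), your proposal is correct and takes essentially that same route --- covering by the $t_1\cdots t_n$ blocks $B_n$ for the upper bound, and a uniformly split Bernoulli measure plus the mass distribution principle for the lower bound, with the block scale $|B_n|\asymp\big((n+1)^2t_{n+1}q_n^2\big)^{-1}$ correctly identified as the critical scale producing $R_n$ rather than the cylinder-scale ratio. Your accounting of where $t_n\to\infty$ enters is exactly right: it forces $\frac1n\log(t_1\cdots t_n)\to\infty$, which absorbs every $e^{O(n)}$ error. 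The one step you assert rather than prove --- that for $r\in[|B_n|,|I_n|)$ the ball meets only $O(1)$ blocks --- is slightly overstated but harmlessly so, and deserves the half page it takes: since $kt_k\le a_k<(k+1)t_k$, any two admissible order-$n$ cylinders have lengths comparable within a factor $O(n^2)$ (the quotient of the $q_n$'s is at most $\prod_{k\le n}\frac{(k+1)t_k}{kt_k}=n+1$), while $|B_n|/|I_n|\asymp\big((n+1)^2t_{n+1}\big)^{-1}$ carries the extra factor $t_{n+1}\to\infty$, so a ball of radius $|B_n|$ cannot engulf any admissible order-$n$ cylinder, and at the larger scales $r<|I_n|$ the ball meets at most polynomially many (in $n$) such cylinders, each of measure $\prod_{k\le n}t_k^{-1}$; the resulting corrections are $O(\log n)$ after taking logarithms and thus negligible against $\log(t_1\cdots t_n)\gg n$. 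One cosmetic point: your closing claim that $\liminf_{r\to0}\log\mu(B(x,r))/\log r$ \emph{equals} $(2+\xi)^{-1}$ for every $x$ is more than you verify and more than you need --- the inequality $\mu(B(x,r))\le Cr^s$ for each $s<(2+\xi)^{-1}$, which your scale analysis does deliver, already yields the lower bound, including the degenerate cases $\xi=0$ and $\xi=\infty$.
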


Inspired by the proof of Theorem 2.4 of \cite{FMSW21}, we are able to obtain the following lemma for providing an upper bound of the Hausdorff dimension of the sets $E_{\sup}(\Lambda,\psi)$, $E_{\inf}(\Lambda,\psi)$ and $E(\Lambda,\psi)$.

\begin{lemma}\label{sj}
Let $\alpha_1,\alpha_2$ be two positive real numbers and let
\[\mathbb{E}(\Lambda,\alpha_1,\alpha_2):=\left\{x\in\Lambda: \alpha_1\leq\liminf\limits_{n\to\infty}\frac{\log a_n(x)}{\log n}\leq\alpha_2\right\}.\]
Then for any $\alpha_2\geq\alpha_1\geq1$, we have
\[\dim_{\rm H}\mathbb{E}(\Lambda,\alpha_1,\alpha_2)\leq\frac{\alpha_2-1}{2\alpha_1}.\]
\end{lemma}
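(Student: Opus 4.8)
The plan is to bound the dimension from above by building, at arbitrarily fine scales, economical covers of $\mathbb{E}(\Lambda,\alpha_1,\alpha_2)$ by cylinders. I would exploit the two sides of the liminf condition asymmetrically. The inequality $\liminf\geq\alpha_1$ forces $a_n(x)\geq n^{\alpha_1-\epsilon}$ for all sufficiently large $n$, and this controls the \emph{lengths} of cylinders through \eqref{length}. The inequality $\liminf\leq\alpha_2$ produces infinitely many indices $n$ with $a_n(x)<n^{\alpha_2+\epsilon}$, and---this is the crucial point---since $x\in\Lambda$ has non-decreasing partial quotients, at such an index every earlier quotient satisfies $a_k(x)\leq a_n(x)<n^{\alpha_2+\epsilon}$ as well. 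Thus at these \emph{good times} the whole block $(a_1,\ldots,a_n)$ lies in $D(\lceil n^{\alpha_2+\epsilon}\rceil,n)$, so Lemma \ref{card} bounds the number of admissible blocks.

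Concretely, I would fix $\epsilon>0$ and decompose $\mathbb{E}(\Lambda,\alpha_1,\alpha_2)\subseteq\bigcup_{N_0\geq1}F_{N_0}$, where $F_{N_0}$ consists of those $x\in\Lambda$ with $a_n(x)\geq n^{\alpha_1-\epsilon}$ for all $n\geq N_0$ and with infinitely many good times. By the countable stability of Hausdorff dimension it suffices to estimate $\dim_{\rm H}F_{N_0}$ for each fixed $N_0$. For a given scale threshold $m$, I cover $F_{N_0}$ by the family $\bigcup_{n\geq m}\mathcal{C}_n$, where $\mathcal{C}_n$ collects all cylinders $I_n(a_1,\ldots,a_n)$ with $1\leq a_1\leq\cdots\leq a_n<n^{\alpha_2+\epsilon}$. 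Each point of $F_{N_0}$ lies in some such cylinder for infinitely many $n$, so this is a genuine cover, and its mesh tends to $0$ as $m\to\infty$.

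For the $s$-sum I would combine two estimates. The cardinality bound $\sharp\mathcal{C}_n\leq\sharp D(\lceil n^{\alpha_2+\epsilon}\rceil,n)\leq(e\,n^{\alpha_2+\epsilon-1})^{n}$ follows from Lemma \ref{card} and the standard inequality $\binom{N}{k}\leq(eN/k)^k$. The length bound $|I_n|\leq(\prod_{k=1}^n a_k)^{-2}\leq C_{N_0}\,(n!)^{-2(\alpha_1-\epsilon)}$ follows from \eqref{length} together with $a_k\geq k^{\alpha_1-\epsilon}$ for $k\geq N_0$. Taking logarithms and using $n!\geq(n/e)^n$, the quantity $\log(\sharp\mathcal{C}_n\cdot|I_n|^s)$ is dominated, at the scale $n\log n$, by the coefficient $(\alpha_2+\epsilon-1)-2s(\alpha_1-\epsilon)$. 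Hence whenever $s>\frac{\alpha_2+\epsilon-1}{2(\alpha_1-\epsilon)}$ this coefficient is negative, $\sum_{I_n\in\mathcal{C}_n}|I_n|^s\to0$ super-exponentially, and therefore $\sum_{n\geq m}\sum_{I_n\in\mathcal{C}_n}|I_n|^s\to0$ as $m\to\infty$, forcing $\mathcal{H}^s(F_{N_0})=0$. Letting $s\downarrow\frac{\alpha_2+\epsilon-1}{2(\alpha_1-\epsilon)}$ and then $\epsilon\downarrow0$ yields $\dim_{\rm H}\mathbb{E}(\Lambda,\alpha_1,\alpha_2)\leq\frac{\alpha_2-1}{2\alpha_1}$.

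The step I expect to be most delicate is the simultaneous handling of the ``eventually'' lower bound and the ``infinitely often'' upper bound: the lengths must be controlled for every large $n$, so the lower bound is used on the full block, whereas the counting can only be afforded at the sparse good times. It is precisely the monotonicity defining $\Lambda$ that transfers the isolated smallness of $a_n$ to all of $a_1,\ldots,a_n$ and makes the cardinality estimate applicable. Verifying that the good-time cover has vanishing mesh and that the $s$-sum genuinely tends to zero---rather than merely staying bounded---is where the balancing of the $n\log n$ terms has to be carried out with care.
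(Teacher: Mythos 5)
Your proposal is correct and follows essentially the same route as the paper's proof: the same countable decomposition over the threshold $N_0$ (the paper's $B_N(\alpha_1,\alpha_2,\varepsilon)$), the same cover by order-$k$ cylinders at the infinitely many good times with the monotonicity of $\Lambda$ transferring $a_k(x)\leq k^{\alpha_2+\varepsilon}$ to the whole block, the counting via Lemma \ref{card}, the length bound from \eqref{length}, and the same balancing of the two exponents at scale $k\log k$ before letting $\varepsilon\to 0^+$. The only cosmetic differences are that the paper builds the lower-bound constraint $a_j\geq j^{\alpha_1-\varepsilon}$ directly into its family $\mathcal{A}_k$ of admissible blocks (whereas you impose it only at the length-estimate stage, which tacitly requires discarding the cylinders of $\mathcal{C}_n$ that do not meet $F_{N_0}$), and that it uses Stirling's formula where you use the bound $\binom{N}{k}\leq (eN/k)^k$.
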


\begin{proof}
Let $\alpha_2\geq\alpha_1\geq1$ and $0<\varepsilon<\alpha_1$. For $x\in\mathbb{E}(\Lambda,\alpha_1,\alpha_2)$, there exists $N\geq1$ such that $a_j(x)\geq j^{\alpha_1-\varepsilon}$ for all $j\geq N$; and $a_k(x)\leq k^{\alpha_2+\varepsilon}$ for infinitely many $k$'s. Then
\begin{align*}
\mathbb{E}(\Lambda,\alpha_1,\alpha_2) \subseteq  \bigcup\limits_{N=1}^{\infty} B_{N}(\alpha_1,\alpha_2,\varepsilon),
\end{align*}
where $B_{N}(\alpha_1,\alpha_2,\varepsilon)$ is defined as
\begin{equation}\label{bhgx}
 B_{N}(\alpha_1,\alpha_2,\varepsilon):=\bigcap_{n=N}^{\infty}\bigcup_{k=n}^{\infty}\Big\{x\in\Lambda: a_k(x)\leq k^{\alpha_2+\varepsilon}, a_{j}(x)\geq j^{\alpha_1-\varepsilon}, \forall N\leq j\leq k\Big\}.
 \end{equation}
It follows that
 \begin{equation*}\label{wsgx1}
 \dim_{\rm H}\mathbb{E}(\Lambda,\alpha_1,\alpha_2)\leq\sup_{N\geq1}\big\{\dim_{\rm H}B_{N}(\alpha_1,\alpha_2,\varepsilon)\big\}.
 \end{equation*}
We shall only computer the upper bound of Hausdorff dimension of $B_{1}(\alpha_1,\alpha_2,\varepsilon)$ since the proofs are similar for other cases $N \geq 2$ .
Write
\[
\mathcal{A}_k:=\big\{(a_1, \cdots, a_k)\in\mathbb{N}^{k}:1\leq a_1\leq\cdots \leq a_k \leq k^{\alpha_2+\varepsilon}, a_j\geq j^{\alpha_1-\varepsilon}, \forall 1\leq j\leq k\big\}.
\]
By \eqref{bhgx}, we have
\begin{equation*}
B_{1}(\alpha_1,\alpha_2,\varepsilon)=\bigcap_{n=1}^{\infty}\bigcup_{k=n}^{\infty}\bigcup_{(a_1, \cdots, a_k) \in \mathcal{A}_k} I_k(a_1, \cdots, a_k),
\end{equation*}
which means that for any $n \geq 1$, the family $\{I_k(a_1, \cdots, a_k): k \geq n, (a_1, \cdots, a_k) \in \mathcal{A}_k\}$ is a cover of $B_{1}(\alpha_1,\alpha_2,\varepsilon)$. To estimate the upper bound of $\dim_{\rm H}B_{1}(\alpha_1,\alpha_2,\varepsilon)$, we need the information about the cardinality of $\mathcal{A}_k$ and the length of $I_k(a_1, \cdots, a_k)$.

For the cardinality of $\mathcal{A}_k$, applying Lemma \ref{card}, we deduce that
\begin{align}\label{deltags}
\sharp\mathcal{A}_k \leq \frac{(k+\lfloor k^{\alpha_2+\varepsilon}\rfloor-1)!}{k!(\lfloor k^{\alpha_2+\varepsilon}\rfloor-1)!}&\leq
\frac{\lfloor k^{\alpha_2+\varepsilon}\rfloor\cdot \left(\lfloor k^{\alpha_2+\varepsilon}\rfloor+1\right)\cdots\left(\lfloor k^{\alpha_2+\varepsilon}\rfloor+k-1\right)}{k!} \nonumber\\
&\leq\frac{k^{k(\alpha_2+\varepsilon)}}{k!}\cdot\left(1+\frac{1}{k^{\alpha_2+\varepsilon}}\right)\cdots\left(1+\frac{k-1}{k^{\alpha_2+\varepsilon}}\right)\nonumber\\
&\leq \frac{2^k\cdot k^{k(\alpha_2+\varepsilon)}}{k!}.
\end{align}
Recall the Stirling formula, we get that
\begin{equation*}\label{st}
\sqrt{2\pi}k^{k+\frac{1}{2}}\exp(-k)\leq k!\leq\exp(1)k^{k+\frac{1}{2}}\exp(-k),
\end{equation*}
which, in combination with \eqref{deltags}, implies that
\begin{equation}\label{jqgs}
 \sharp\mathcal{A}_k \leq 2^{k}\cdot (k!)^{\alpha_2+\varepsilon-1}\cdot\left(\frac{\exp(k)}{\sqrt{2\pi k}}\right)^{\alpha_2+\varepsilon}\leq 2^{k}\cdot\exp((\alpha_2+\varepsilon)k) \cdot(k!)^{\alpha_2+\varepsilon-1}.
\end{equation}
For the length of $I_k(a_1, \cdots, a_k)$, it follows from \eqref{length} that
\begin{equation}\label{zzss}
|I_k(a_1,\cdots, a_k)|   \leq \left(\prod\limits_{j=1}^{k}a _{j}\right)^{-2} \leq (k!)^{-2(\alpha_1-\varepsilon)}.
\end{equation}
Let $s:=\frac{\alpha_2+2\varepsilon-1}{2(\alpha_1-\varepsilon)}$. Denote by $\mathcal{H}^{s}$ the $s$-dimensional Hausdorff measure. We conclude from \eqref{jqgs} and \eqref{zzss} that
\begin{align*}
\mathcal{H}^{s}(B_{1}(\alpha_1,\alpha_2,\varepsilon))& \leq\liminf_{n\to\infty}\sum\limits_{k=n}^{\infty}\sum\limits_{(a_1, \cdots, a_k) \in \mathcal{A}_k}|I_{k}(a_1, \cdots,a_k)|^{s}\\
& \leq \liminf_{n\to\infty}\sum\limits_{k=n}^{\infty} \sharp \mathcal{A}_k \cdot (k!)^{-2s(\alpha_1-\varepsilon)}\\
&\leq\liminf_{n\to\infty}\sum\limits_{k=n}^{\infty}\frac{2^{k}\cdot\exp((\alpha_2+\varepsilon)k) \cdot(k!)^{\alpha_2+\varepsilon-1}}{(k!)^{\alpha_2+2\varepsilon-1}}\\
&=\liminf_{n\to\infty}\sum\limits_{k=n}^{\infty}\frac{2^{k}\cdot\exp((\alpha_2+\varepsilon)k)}
{(k!)^{\varepsilon}}=0.
\end{align*}
This shows that
\[\dim_{\rm H}B_{1}(\alpha_1,\alpha_2,\varepsilon)\leq\frac{\alpha_2+2\varepsilon-1}{2(\alpha_1-\varepsilon)}.\]
Letting $\varepsilon \to 0^+$, we obtain the desired upper bound.
\end{proof}

\section{Proofs of main results}
In this section, we will prove Theorem \ref{suplim}. The proof is divided into two cases: the Hausdorff dimension of $E_{\sup}(\Lambda,\psi)$ and the Hausdorff dimension of $E(\Lambda,\psi)$.

\subsection{Hausdorff dimension of $E_{\sup}(\Lambda,\psi)$}
Recall that
\[
E_{\sup}(\Lambda,\psi)=\left\{x\in\Lambda:\ \limsup\limits_{n\to\infty}\frac{\log a_n(x)}{\psi(n)}=1\right\}.
\]
We will give the proof of Theorem \ref{suplim} for the Hausdorff dimension of $E_{\sup}(\Lambda,\psi)$ when $\psi(n)/\log n\to\alpha\ (0\leq\alpha<\infty)$ and $\psi(n)/\log n\to \infty$ respectively.

\subsubsection{Case $\psi(n)/\log n\to\alpha\ (0\leq\alpha<\infty)$}
For the upper bound of $\dim_{\rm H}E_{\sup}(\Lambda,\psi)$, we remark that
\begin{equation}\label{inf45}
E_{\sup}(\Lambda,\psi) \subseteq\left\{x\in\Lambda: \liminf\limits_{n\to\infty}\frac{\log a_n(x)}{\log n}\leq\alpha\right\}.
\end{equation}
So it is sufficient to give the upper bound of the Hausdorff dimension of the set on the right-hand side of \eqref{inf45}.

\begin{lemma}\label{infles}
For $0\leq\alpha <\infty$,
\[
\dim_{\rm H}\left\{x\in\Lambda: \liminf\limits_{n\to\infty}\frac{\log a_n(x)}{\log n}\leq\alpha\right\} \leq
\left\{
  \begin{array}{ll}
    0, & \hbox{$0\leq\alpha<1$;} \\
    \frac{\alpha-1}{2\alpha}, & \hbox{$\alpha \geq 1$.}
  \end{array}
\right.
\]
\end{lemma}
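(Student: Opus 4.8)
The plan is to split the set according to the value of the quantity
\[
\ell(x):=\liminf_{n\to\infty}\frac{\log a_n(x)}{\log n},
\]
which is well defined for every $x\in\Lambda$. Writing $L(\alpha)$ for the set in the statement, I decompose
\[
L(\alpha)=\bigl\{x\in\Lambda:\ell(x)<1\bigr\}\ \cup\ \bigl\{x\in\Lambda:1\le\ell(x)\le\alpha\bigr\},
\]
the second piece being empty when $\alpha<1$. I will show that the first piece always has Hausdorff dimension $0$, and that the second piece has dimension at most $\frac{\alpha-1}{2\alpha}$ via Lemma \ref{sj}; the conclusion then follows by taking the maximum, since $\max\{0,\frac{\alpha-1}{2\alpha}\}$ equals $0$ for $\alpha<1$ and $\frac{\alpha-1}{2\alpha}$ for $\alpha\ge1$.

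For the first piece it suffices to prove that for every rational $\alpha'<1$ the set $F_{\alpha'}:=\{x\in\Lambda: a_k(x)\le k^{\alpha'}\text{ for infinitely many }k\}$ has dimension $0$, because $\ell(x)<1$ forces $x\in F_{\alpha'}$ for some such $\alpha'$, and $\{x\in\Lambda:\ell(x)<1\}=\bigcup_m\{x\in\Lambda:\ell(x)\le 1-1/m\}$ is then a countable union of dimension-zero sets. Here the monotonicity built into $\Lambda$ is decisive: if $a_k(x)\le k^{\alpha'}$ then $1\le a_1(x)\le\cdots\le a_k(x)\le \lfloor k^{\alpha'}\rfloor$, so the admissible prefixes lie in $D(\lfloor k^{\alpha'}\rfloor,k)$, and Lemma \ref{card} yields $\sharp\le\binom{k+\lfloor k^{\alpha'}\rfloor-1}{\lfloor k^{\alpha'}\rfloor-1}\le(2k)^{k^{\alpha'}}$, whose logarithm is $o(k)$ precisely because $\alpha'<1$. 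Covering $F_{\alpha'}$ by the corresponding cylinders $I_k$ and using only the universal bound $|I_k|\le 20\big(\frac{1+\sqrt{5}}{2}\big)^{-2k}$ from \eqref{2cd}, for any $s>0$ the general term of $\sum_{k\ge n}\sharp\cdot|I_k|^{s}$ is $\exp\big(o(k)-2ks\log\frac{1+\sqrt{5}}{2}\big)\to0$; hence $\mathcal{H}^{s}(F_{\alpha'})=0$ for every $s>0$ and $\dim_{\rm H}F_{\alpha'}=0$. This already settles the case $\alpha<1$ entirely.

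For the second piece (relevant when $\alpha\ge1$) I partition $[1,\alpha]$ by $1=\alpha_0<\alpha_1<\cdots<\alpha_m=\alpha$ with equal spacing $\delta=(\alpha-1)/m$, so that $\{x\in\Lambda:1\le\ell(x)\le\alpha\}\subseteq\bigcup_{i=1}^m\mathbb{E}(\Lambda,\alpha_{i-1},\alpha_i)$ with each $\alpha_{i-1}\ge1$. Lemma \ref{sj} then gives
\[
\dim_{\rm H}\bigl\{x\in\Lambda:1\le\ell(x)\le\alpha\bigr\}\le\max_{1\le i\le m}\frac{\alpha_i-1}{2\alpha_{i-1}}.
\]
Writing $\frac{\alpha_i-1}{2\alpha_{i-1}}=\frac12+\frac{\delta-1}{2\alpha_{i-1}}$ shows that, once $m$ is large enough that $\delta<1$, this quantity increases with $\alpha_{i-1}$, so the maximum is attained at $i=m$ and equals $\frac{\alpha-1}{2(\alpha-\delta)}$. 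Letting $m\to\infty$ (i.e.\ $\delta\to0^+$) forces the bound $\frac{\alpha-1}{2\alpha}$, and combining with the first piece gives $\dim_{\rm H}L(\alpha)\le\max\{0,\frac{\alpha-1}{2\alpha}\}=\frac{\alpha-1}{2\alpha}$ for $\alpha\ge1$.

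The main obstacle is the first piece. When $\ell(x)<1$ there is no useful lower bound on the partial quotients, so the sharp length estimate $|I_k|\le(\prod_{j}a_j)^{-2}$ from \eqref{length} degenerates (the product can be as small as $1$), and one is forced back onto the generic decay $|I_k|\le 20\big(\frac{1+\sqrt{5}}{2}\big)^{-2k}$. What makes the argument close is that the combinatorial entropy $\log\sharp D(\lfloor k^{\alpha'}\rfloor,k)$ grows only like $k^{\alpha'}\log k=o(k)$ exactly because $\alpha'<1$, and this sub-linear growth is dominated by the linear decay of the cylinder lengths; it is precisely at the threshold $\alpha'=1$ that the two competing rates become comparable and the dimension ceases to vanish, which is why the answer in the statement switches from $0$ to $\frac{\alpha-1}{2\alpha}$ at $\alpha=1$.
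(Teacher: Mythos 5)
Your proposal is correct and takes essentially the same route as the paper's proof: the same covering of the $\liminf<1$ part by cylinders whose non-decreasing entries are bounded by $k^{\alpha'}$ (Lemma \ref{card} combined with the uniform length bound \eqref{2cd}, with arbitrary exponent $s>0$), and the same decomposition $\{\ell\le\alpha\}=\{\ell<1\}\cup\{1\le\ell\le\alpha\}$ followed by an equal-spacing partition of $[1,\alpha]$, Lemma \ref{sj}, a monotonicity observation, and a limit in the mesh size. The only quibble is the degenerate endpoint $\alpha=1$, where your partition $1=\alpha_0<\cdots<\alpha_m=\alpha$ is vacuous; this is immediately repaired by applying Lemma \ref{sj} directly with $\alpha_1=\alpha_2=1$ (the paper instead invokes Theorem \ref{inf} at this point).
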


\begin{proof}
For $0\leq\alpha <1$, let $0<\varepsilon<1-\alpha$. By the definition of liminf,
\begin{equation}\label{1bhgx}
\left\{x\in\Lambda: \liminf\limits_{n\to\infty}\frac{\log a_n(x)}{\log n}\leq\alpha\right\}\subseteq\bigcap_{n=1}^{\infty}\bigcup_{k=n}^{\infty}\bigcup_{(a_1, \cdots, a_k)\in  \mathcal{C}_k}I_k(a_1, \cdots, a_k),
\end{equation}
where $\mathcal{C}_{k}$ is given by
\[
\mathcal{C}_{k}:=\big\{(a_1, \cdots, a_k)\in\mathbb{N}^{k}: 1\leq a_1\leq \cdots\leq a_k\leq k^{\alpha+\varepsilon}\big\}.
\]
Note that the cardinality of $\mathcal{C}_k$ satisfies
\begin{align}\label{ak33}
\nonumber \sharp \mathcal{C}_k=\frac{(k+\lfloor k^{\alpha+\varepsilon}\rfloor-1)!}{k!(\lfloor k^{\alpha+\varepsilon}\rfloor-1)!}
&\leq (k+1)\cdots(k+\lfloor k^{\alpha+\varepsilon}\rfloor-1)\\
&<(k+k^{\alpha+\varepsilon})^{k^{\alpha+\varepsilon}} \nonumber\\
&< \exp(k^{\alpha+\varepsilon}(\log k+1)),
\end{align}
and for any $(a_1, \cdots, a_k) \in \mathcal{C}_k$, it derives from \eqref{2cd} that
\begin{equation}\label{20}
|I_{k}(a_1, \cdots, a_k)|  \leq20\left(\frac{1+\sqrt{5}}{2}\right)^{-2k}.
\end{equation}
Taking $s=\varepsilon$ and combining \eqref{1bhgx}, \eqref{ak33} and \eqref{20}, we conclude that the $s$-dimensional Hausdorff measure of the set on the right-hand side of \eqref{inf45} is not greater than
\begin{align*}
\liminf_{n\to\infty}\sum\limits_{k=n}^{\infty}\sum\limits_{(a_1, \cdots, a_k) \in \mathcal{C}_k}|I_{k}(a_1, \cdots,a_k)|^{s}
& \leq \liminf_{n\to\infty}\sum\limits_{k=n}^{\infty} \sharp \mathcal{C}_k \cdot \frac{20^{\varepsilon}}{\big((1+\sqrt{5})/2\big)^{2k\varepsilon}}\\
& \leq \liminf_{n\to\infty}\sum\limits_{k=n}^{\infty}\frac{20^{\varepsilon}\cdot\exp(k^{\alpha+\varepsilon}(\log k+1))}{\big((1+\sqrt{5})/2\big)^{2k\varepsilon}}=0,
\end{align*}
which yields that
\[
\dim_{\rm H}\left\{x\in\Lambda: \liminf\limits_{n\to\infty}\frac{\log a_n(x)}{\log n}\leq\alpha\right\} \leq 0
\]
since $\varepsilon$ is arbitrary.

For $\alpha \geq 1$, we point out that
\begin{equation}\label{ii}
\left\{x\in\Lambda: \liminf\limits_{n\to\infty}\frac{\log a_n(x)}{\log n}\leq\alpha\right\} = \left\{x\in\Lambda: \liminf\limits_{n\to\infty}\frac{\log a_n(x)}{\log n}<1\right\} \bigcup \mathbb{E}(\Lambda,1,\alpha),
\end{equation}
where $\mathbb{E}(\Lambda,1,\alpha)$ is defined as in Lemma \ref{sj}. Note that
\[
\left\{x\in\Lambda: \liminf\limits_{n\to\infty}\frac{\log a_n(x)}{\log n}<1\right\} = \bigcup^\infty_{K=1} \left\{x\in\Lambda: \liminf\limits_{n\to\infty}\frac{\log a_n(x)}{\log n} \leq 1 -\frac{1}{K}\right\},
\]
so it has Hausdorff dimension zero. By \eqref{ii}, we need only consider the upper bound of $\dim_{\rm H}\mathbb{E}(\Lambda,1,\alpha)$. When $\alpha =1$, we have $\dim_{\rm H}\mathbb{E}(\Lambda,1,\alpha) =0$ by Theorem \ref{inf}. When $\alpha >1$, since for any $n >\alpha-1$,
\[
\mathbb{E}(\Lambda,1,\alpha) \subseteq \bigcup^{n-1}_{k=0} \mathbb{E}(\Lambda,1+k\cdot\frac{\alpha-1}{n}, 1+(k+1)\cdot\frac{\alpha-1}{n}),
\]
it follows from Lemma \ref{sj} that
\begin{align*}
\dim_{\rm H}\mathbb{E}(\Lambda,1,\alpha) &\leq \max_{0 \leq k \leq n-1} \left\{\dim_{\rm H}\mathbb{E} (\Lambda,1+k\cdot\frac{\alpha-1}{n}, 1+(k+1)\cdot\frac{\alpha-1}{n})\right\} \nonumber \\
&\leq \max_{0 \leq k \leq n-1} \left\{ \frac{(k+1)\cdot \frac{\alpha-1}{n}}{2(1+k\cdot \frac{\alpha-1}{n})}\right\}.
\end{align*}
Combining this with the fact that for $0<\beta <1$, the map $x\mapsto \frac{(x+1)\beta}{2(1+x\beta)}$ is increasing, we see that
\begin{align*}
\dim_{\rm H}\mathbb{E}(\Lambda,1,\alpha)  \leq  \frac{\alpha-1}{2(1+(\alpha-1)\cdot\frac{n-1}{n})}.
\end{align*}
Letting $n \to \infty$, we get that $\dim_{\rm H}\mathbb{E}(\Lambda,1,\alpha)  \leq (\alpha -1)/(2\alpha)$. In view of \eqref{ii},
\begin{equation*}
\dim_{\rm H}\left\{x\in\Lambda: \liminf\limits_{n\to\infty}\frac{\log a_n(x)}{\log n}\leq\alpha\right\}  \leq \frac{\alpha -1}{2\alpha}.
\end{equation*}
\end{proof}

For the lower bound of $\dim_{\rm H}E_{\sup}(\Lambda,\psi)$, when $0 \leq \alpha \leq 1$, we have $\dim_{\rm H}E_{\sup}(\Lambda,\psi) =0$; when $\alpha >1$, let $t_n:=2\lfloor n^{\alpha-1}\rfloor$ and
\[
\mathbb{E}(\{t_n\})=\big\{x\in (0,1): nt_n\leq a_n(x)<(n+1)t_n, \forall n\geq1\big\}.
\]
Then $\{t_n\}$ is non-decreasing, and so $\mathbb{E}(\{t_n\})$ is a subset of $E_{\sup}(\Lambda,\psi)$. Since
\[
\xi= \limsup\limits_{n\to\infty}\frac{2\log(n+1)!+\log t_{n+1}}{\log(t_1 t_2\cdots t_n)} = \frac{2}{\alpha -1},
\]
applying Lemma \ref{fmsw}, we deduce that
\begin{align*}
\dim_{\rm H}E_{\sup}(\Lambda,\psi)\geq\dim_{\rm H}\mathbb{E}(\{t_n\}) = \frac{1}{2+\xi} = \frac{\alpha -1}{2\alpha}.
\end{align*}

\subsubsection{Case $\psi(n)/\log n\to\infty$ }

For the upper bound of $\dim_{\rm H}E_{\sup}(\Lambda,\psi)$, we remark that $E_{\sup}(\Lambda,\psi) \subseteq F(\Lambda,\psi)$, where $F(\Lambda,\psi)$ is given by
\[
F(\Lambda,\psi):= \Big\{x\in\Lambda: a_n(x)\geq 2^{\psi(n)}\ \text{for infinitely many $n$'s}\Big\}.
\]

\begin{lemma}\label{varphi}
Let $\varphi: \mathbb{N} \to \mathbb{R}^+$ be a function. Then
\[
\dim_{\rm H}F(\Lambda,\varphi) = \frac{1}{\gamma+1},
\]
where $\gamma \in [1,\infty]$ is given by
\[
\log \gamma:=\liminf_{n \to \infty} \frac{\log\varphi(n)}{n}.
\]
\end{lemma}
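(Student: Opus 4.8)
The plan is to establish the two matching estimates $\dim_{\rm H}F(\Lambda,\varphi)\ge 1/(\gamma+1)$ and $\dim_{\rm H}F(\Lambda,\varphi)\le 1/(\gamma+1)$ separately, after disposing of the extreme cases. Since $F(\Lambda,\varphi)\subseteq\Lambda$ and $\dim_{\rm H}\Lambda=1/2$, and since $1/(\gamma+1)=1/2$ exactly when $\gamma=1$, the content is genuine only for $1<\gamma\le\infty$; when $\gamma=\infty$ both bounds equal $0$ and the lower bound is vacuous, so I would treat $1<\gamma<\infty$ as the main case and recover $\gamma=1,\infty$ as limits.

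For the lower bound I would invoke Lemma~\ref{fmsw}. Fix any $c>\gamma$ and set $t_n:=\lceil 2^{c^{n}}\rceil$. Then $\{t_n\}$ is non-decreasing with $t_n\ge 2$ and $t_n\to\infty$, so $\mathbb{E}(\{t_n\})\subseteq\Lambda$. Choosing a subsequence $n_k$ realizing the liminf in $\log\gamma=\liminf_n\log\varphi(n)/n$, one has $\varphi(n_k)<c^{n_k}$ for all large $k$, whence every $x\in\mathbb{E}(\{t_n\})$ satisfies $a_{n_k}(x)\ge n_k t_{n_k}\ge 2^{c^{n_k}}\ge 2^{\varphi(n_k)}$ for large $k$; thus $\mathbb{E}(\{t_n\})\subseteq F(\Lambda,\varphi)$. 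A direct computation using the geometric sum $\log(t_1\cdots t_n)\sim(\log2)\,(c^{n+1}-c)/(c-1)$ (the term $2\log(n+1)!$ being negligible against $c^{n}$) gives $\xi=c-1$ in Lemma~\ref{fmsw}, so $\dim_{\rm H}F(\Lambda,\varphi)\ge 1/(2+\xi)=1/(1+c)$. Letting $c\downarrow\gamma$ yields the desired lower bound.

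For the upper bound I would discard the monotonicity and compare with the classical set $B(2^{\varphi})=\{x\in(0,1):a_n(x)\ge 2^{\varphi(n)}\text{ for infinitely many }n\}\supseteq F(\Lambda,\varphi)$. Since $\liminf_n\frac{\log\log 2^{\varphi(n)}}{n}=\liminf_n\frac{\log\varphi(n)}{n}=\log\gamma$, this places us in the super-exponential regime of the Wang--Wu theorem \cite{WW08}, whose dimension formula there reads exactly $\dim_{\rm H}B(2^{\varphi})=1/(\gamma+1)$. Hence $\dim_{\rm H}F(\Lambda,\varphi)\le 1/(\gamma+1)$, and combining with the lower bound gives equality; in particular the non-decreasing constraint does not lower the dimension in this regime, consistent with $\dim_{\rm H}E_{\sup}(\Lambda,\psi)=\dim_{\rm H}E_{\sup}(\psi)=1/(A+1)$.

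The main obstacle is precisely this upper bound \emph{below} $1/2$. The naive self-contained cover---fixing a ``crossing'' index $n$, so that by monotonicity $a_j<2^{\varphi(n)}$ for $j<n$ (a prefix counted by Lemma~\ref{card}), and absorbing the unbounded block $a_n\ge 2^{\varphi(n)}$ into a single interval of length $\approx |I_{n-1}|/2^{\varphi(n)}$ via the clustering of the sub-cylinders---only delivers $\dim_{\rm H}\le 1/2$, because a single crossing still leaves a full non-decreasing tail of dimension $1/2$ and, for $s<1/2$, summing these single-scale covers over $n$ diverges. Breaking the $1/2$ barrier forces one to exploit that the events $\{a_n\ge 2^{\varphi(n)}\}$ recur infinitely often: one must handle consecutive crossings simultaneously and telescope the products $\prod_j a_j$, which monotonicity forces to be large between crossings. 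This multiscale bookkeeping---equivalently, the Wang--Wu covering---is the heart of the matter, whereas the lower bound and the boundary cases are routine.
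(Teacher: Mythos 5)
Your proposal is correct and follows essentially the same route as the paper: the lower bound via Lemma \ref{fmsw} applied to a non-decreasing doubly-exponential sequence $t_n\approx 2^{c^{n}}$ with $c\downarrow\gamma$, and the upper bound by discarding the monotonicity constraint and citing a known dimension formula for the corresponding ``infinitely often'' set. The only cosmetic difference is that the paper sandwiches $F(\Lambda,\varphi)$ between $F(\Lambda,2,\gamma+\varepsilon)$ and $\widetilde{F}(\Lambda,2,\gamma-\varepsilon)$ and invokes {\L}uczak's theorem \cite{Luc97}, whereas you apply the general Wang--Wu formula \cite{WW08} to $B(2^{\varphi})$ directly (legitimate, since $\gamma>1$ forces $\liminf_{n}\varphi(n)/n=\infty$, i.e.\ the super-exponential regime, and the case $\gamma=1$ is covered by $\dim_{\rm H}\Lambda=1/2$); the two citations are interchangeable, as the paper itself remarks.
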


\begin{proof}
For $a, b>1$, let
\[
F(\Lambda,a,b):= \Big\{x\in\Lambda: a_n(x)\geq a^{b^n}, \forall n \geq 1\Big\}
\]
and
\[
\widetilde{F}(\Lambda,a,b):= \Big\{x\in\Lambda: a_n(x)\geq a^{b^n}\ \text{for infinitely many $n$'s}\Big\}.
\]
We claim that
\begin{equation*}\label{Lab}
\dim_{\rm H} F(\Lambda,a,b) = \dim_{\rm H} \widetilde{F}(\Lambda,a,b) = \frac{1}{b+1}.
\end{equation*}
In fact, the lower bound of $\dim_{\rm H} F(\Lambda,a,b)$ can be read off from Lemma \ref{fmsw} by putting $t_n:=2a^{b^n}$ and the upper bound of $\dim_{\rm H} \widetilde{F}(\Lambda,a,b)$ follows from the result of {\L}uczak \cite{Luc97} (see \cite{WW08} for general results).
Next we are ready to deal with $\dim_{\rm H}F(\Lambda,\varphi)$ according to $\gamma =1$, $1<\gamma <\infty$ and $\gamma =\infty$ respectively.

When $\gamma =1$, for any small $\varepsilon >0$, we see that $\varphi(n) \leq (1+\varepsilon)^n$ for infinitely many $n$'s. Then $F(\Lambda,2, 1+\varepsilon) \subseteq F(\Lambda,\varphi)$, and so
\[
\frac{1}{2+\varepsilon} = \dim_{\rm H}F(\Lambda,2, 1+\varepsilon) \leq \dim_{\rm H} F(\Lambda,\varphi) \leq \dim_{\rm H} \Lambda = \frac{1}{2}.
\]
Letting $\varepsilon \to 0^+$, we obtain $\dim_{\rm H} F(\Lambda,\varphi) = 1/2 =1/(\gamma+1)$.

When $1<\gamma <\infty$, for any small $0<\varepsilon <\gamma-1$, we see that $\varphi(n) \geq (\gamma-\varepsilon)^n$ for sufficiently large $n$, and $\varphi(n) \leq (\gamma+\varepsilon)^n$ for infinitely many $n$'s.
Then $F(\Lambda,2, \gamma+\varepsilon) \subseteq F(\Lambda,\varphi) \subseteq \widetilde{F}(\Lambda,2,\gamma-\varepsilon)$, and so
\[
\frac{1}{\gamma+\varepsilon +1} =\dim_{\rm H}F(\Lambda,2, \gamma+\varepsilon) \leq \dim_{\rm H} F(\Lambda,\varphi) \leq \dim_{\rm H}\widetilde{F}(\Lambda,2,\gamma-\varepsilon) = \frac{1}{\gamma-\varepsilon +1}.
\]
Since $\varepsilon$ is arbitrary, we have $\dim_{\rm H} F(\Lambda,\varphi) =1/(\gamma+1)$.

When $\gamma =\infty$, for any large $K>1$, we see that $\varphi(n) \geq K^n$ for sufficiently large $n$. Then $F(\Lambda,\varphi) \subseteq \widetilde{F}(\Lambda,2,K)$, and so
\[
\dim_{\rm H} F(\Lambda,\varphi) \leq \dim_{\rm H}\widetilde{F}(\Lambda,2,K) = \frac{1}{K +1},
\]
which implies that $\dim_{\rm H} F(\Lambda,\varphi) = 0 =1/(\gamma+1)$ by letting $K \to \infty$.
\end{proof}

From Lemma \ref{varphi}, we deduce that
\[
\dim_{\rm H}E_{\sup}(\Lambda,\psi) \leq \dim_{\rm H} F(\Lambda,\psi) = \frac{1}{A+1}\ \ \text{with}\ \ \log A =\liminf_{n \to \infty} \frac{\log\psi(n)}{n}.
\]

For the lower bound of $\dim_{\rm H}E_{\sup}(\Lambda,\psi)$, we shall construct a suitable subset of $E_{\sup}(\Lambda,\psi)$. To this end, we follow the notation used in \cite[p.\,901--903]{FMS20}.
Let
\begin{equation*}
\theta(n):=\min\limits_{k\geq n}\{\psi(k)\},\ \ \forall n\geq1
\end{equation*}
and define a sequence $\{d_n\}$ as follows:
\begin{equation*}
d_1:=\exp(\theta(1))\ \ \text{and}\ \ d_n:=\min\left\{\exp(\theta(n)),\prod\limits_{k=1}^{n-1}d^{A-1+\varepsilon}_k\right\}\ (n\geq2).
\end{equation*}
Then $d_{n+1}\geq d_n\,(\forall n\geq2)$,
\begin{align}\label{dnlogn}
\lim_{n\to\infty}\frac{\log d_{n}}{\log n} =\infty,
\end{align}

\begin{align}\label{wan3}
\limsup\limits_{n\to\infty}\frac{\log d_{n+2}}{\log d_2+\cdots+\log d_{n+1}} \leq A-1+\varepsilon
\end{align}
and
\begin{equation}\label{cn}
\limsup_{n \to \infty} \frac{\log d_n}{\psi(n)}=1.
\end{equation}
Let $t_n:= 2d_{n+1}$ for all $n \geq 1$. Then $\{t_n\}$ is non-decreasing. Write
\[
\mathbb{E}(\{t_n\})=\big\{x\in (0,1): nt_n\leq a_n(x)<(n+1)t_n, \forall n\geq1\big\}.
\]
By \eqref{cn} and the condition $\psi(n)/\log n\to\infty$ as $n \to \infty$, we see that $\mathbb{E}(\{t_n\})$ is a subset of $E_{\sup}(\Lambda,\psi)$. It follows from Lemma \ref{fmsw} that
\begin{equation*}
\dim_{\rm H}E_{\sup}(\Lambda,\psi) \geq \dim_{\rm H}\mathbb{E}(\{t_n\})= \frac{1}{2+\xi}\ \ \text{with}\ \ \xi= \limsup\limits_{n\to\infty}\frac{2\log(n+1)!+
\log d_{n+2}}{\log d_2+\cdots+\log d_{n+1}}.
\end{equation*}
By the Stolz-Ces\`{a}ro theorem, \eqref{dnlogn} and \eqref{wan3}, we deduce that
\begin{align*}
\xi &\leq \limsup\limits_{n\to\infty}\frac{2\log(n+1)!}{\log d_2+\cdots+\log d_{n+1}}
+\limsup\limits_{n\to\infty}\frac{\log d_{n+2}}{\log d_2+\cdots+\log d_{n+1}}\\
\nonumber &\leq \limsup\limits_{n\to\infty}\frac{2\log(n+1)}{\log d_{n+1}}+
A-1+\varepsilon\\
&=A-1+\varepsilon.
\end{align*}
Therefore,
\begin{align*}
\dim_{\rm H}E_{\sup}(\Lambda,\psi)\geq \frac{1}{A+1+\varepsilon}.
\end{align*}
Since $\varepsilon$ is arbitrarily, we get that $\dim_{\rm H}E_{\sup}(\Lambda,\psi)\geq 1/(A+1)$.

\subsection{Hausdorff dimension of $E(\Lambda, \psi)$}

Let $\psi$ and $\widetilde{\psi}$ be positive functions defined on $\mathbb{N}$. We say that $\psi$ and $\widetilde{\psi}$ are \emph{equivalent} if $\psi(n)/\widetilde{\psi}(n)\to1$ as $n\to\infty$.
Recall that
\[
E(\Lambda, \psi)=\left\{x\in \Lambda: \lim\limits_{n\to\infty}\frac{\log a_n(x)}{\psi(n)}=1\right\}.
\]

\begin{lemma}\label{limnon-empty}
$E(\Lambda,\psi)\neq\emptyset$ if and only if $\psi$ is equivalent to a non-decreasing function.
\end{lemma}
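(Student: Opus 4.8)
The plan is to prove both implications directly by building, in each direction, an explicit bridge between the non-decreasing sequence of partial quotients and the function $\psi$. The crucial observation is that for $x\in\Lambda$ the map $n\mapsto\log a_n(x)$ is itself non-decreasing, so it is the natural candidate for a non-decreasing function equivalent to $\psi$; conversely, any prescribed non-decreasing function can be realized, up to asymptotic equivalence, as $\log a_n$ for a suitable non-decreasing sequence of positive integers. Both directions then reduce to the fact that asymptotic equivalence is symmetric and is preserved under $\exp$ and $\log$ once the functions tend to infinity.

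For the necessity, I would take any $x\in E(\Lambda,\psi)$ and set $\widetilde{\psi}(n):=\log a_n(x)$. Since $x\in\Lambda$ the partial quotients satisfy $a_n(x)\leq a_{n+1}(x)$, so $\widetilde{\psi}$ is non-decreasing. Because $\psi(n)\to\infty$ and $\log a_n(x)/\psi(n)\to1$, we also get $\widetilde{\psi}(n)\to\infty$, and taking reciprocals of the limit gives $\psi(n)/\widetilde{\psi}(n)\to1$. Hence $\psi$ is equivalent to the non-decreasing function $\widetilde{\psi}$.

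For the sufficiency, let $\widetilde{\psi}$ be a non-decreasing function with $\psi(n)/\widetilde{\psi}(n)\to1$; since $\psi(n)\to\infty$ this forces $\widetilde{\psi}(n)\to\infty$. I would define
\[
a_n:=\bigl\lceil\exp(\widetilde{\psi}(n))\bigr\rceil\qquad(n\geq1).
\]
As $\widetilde{\psi}$ is non-decreasing, so is $n\mapsto\exp(\widetilde{\psi}(n))$ and hence the integer sequence $\{a_n\}$; moreover each $a_n\geq1$, so $x:=[a_1,a_2,\cdots]$ is a well-defined point of $\Lambda$. The rounding estimate
\[
\widetilde{\psi}(n)\leq\log a_n\leq\widetilde{\psi}(n)+\log\bigl(1+\exp(-\widetilde{\psi}(n))\bigr)
\]
together with $\widetilde{\psi}(n)\to\infty$ yields $\log a_n/\widetilde{\psi}(n)\to1$, and combining this with $\widetilde{\psi}(n)/\psi(n)\to1$ gives $\log a_n/\psi(n)\to1$. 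Hence $x\in E(\Lambda,\psi)$, so this set is non-empty.

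I do not expect a genuine obstacle, since the statement is essentially a realizability result. The only point requiring a little care is to keep the constructed $a_n$ simultaneously integer-valued and monotone while controlling $\log a_n$ to within $o(\widetilde{\psi}(n))$: the ceiling function handles the monotonicity automatically (because $\exp\circ\,\widetilde{\psi}$ is already monotone and rounding up preserves this), and the error term $\log(1+\exp(-\widetilde{\psi}(n)))$ is negligible precisely because $\widetilde{\psi}(n)\to\infty$.
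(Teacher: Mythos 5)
Your proof is correct and takes essentially the same route as the paper: in the necessity direction you use the monotonicity of $n\mapsto \log a_n(x)$ for $x\in\Lambda$ to produce the non-decreasing function, and in the sufficiency direction you realize $\exp(\widetilde{\psi}(n))$ by integer rounding and kill the rounding error using $\widetilde{\psi}(n)\to\infty$. The only cosmetic differences are that the paper takes $\widetilde{\psi}(n):=\lfloor\log a_n(x_0)\rfloor+1$ rather than $\log a_n(x_0)$ (which guarantees the equivalent function is \emph{positive}, as required by the paper's definition of equivalence, whereas your $\widetilde{\psi}(n)$ vanishes at the finitely many $n$ with $a_n(x)=1$) and uses the floor instead of the ceiling in the construction of $\widetilde{x}$; both points are immaterial.
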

\begin{proof}
If $E(\Lambda,\psi)\neq\emptyset$, then we take $x_0\in E(\Lambda,\psi)$, and so
\[
a_{n+1}(x_0)\geq a_n(x_0), \forall n \geq 1\ \ \text{and}\ \ \lim\limits_{n\to\infty}\frac{\log a_n(x_0)}{\psi(n)}=1.
\]
Define $\widetilde{\psi}(n):=\lfloor\log a_n(x_0)\rfloor+1$ for all $n\geq1$. Then we see that $\widetilde{\psi}$ is non-decreasing and is equivalent to $\psi$.

Suppose that $\psi$ and $\widetilde{\psi}$ are equivalent and $\widetilde{\psi}$ is non-decreasing. Define a point $\widetilde{x}\in(0,1)$ such that $a_n(\widetilde{x})=\lfloor\exp(\widetilde{\psi}(n))\rfloor$ for all $n\geq1$.
Then $a_{n+1}(\widetilde{x})\geq a_n(\widetilde{x}), \forall n \geq 1$ and
\[
\lim\limits_{n\to\infty}\frac{\log a_n(\widetilde{x})}{\psi(n)}= \lim\limits_{n\to\infty}\frac{\widetilde{\psi}(n)}{\psi(n)}=1.
\]
That is to say, $\widetilde{x}\in E(\Lambda,\psi)$, and thus $E(\Lambda,\psi)\neq\emptyset$.
\end{proof}

We remark that $E(\Lambda,\psi)=E(\Lambda,\tilde{\psi})$ if $\psi$ and $\tilde{\psi}$ are equivalent. By Lemma \ref{limnon-empty}, we assume that $\psi$ is non-decreasing in dealing with $E(\Lambda, \psi)$.

\subsubsection{Case $\psi(n)/\log n\to\alpha\ (0\leq\alpha<\infty)$}

For the upper bound of $\dim_{\rm H}E(\Lambda, \psi)$, we see that
\[
E(\Lambda, \psi) \subseteq \left\{x\in \Lambda: \lim\limits_{n\to\infty}\frac{\log a_n(x)}{\log n}=\alpha\right\}\subseteq \left\{x\in \Lambda: \liminf\limits_{n\to\infty}\frac{\log a_n(x)}{\log n}=\alpha\right\}.
\]
It follows from Theorem \ref{inf} that
\[
\dim_{\rm H}E(\Lambda, \psi) \leq
\left\{
  \begin{array}{ll}
    0, & \hbox{$0\leq\alpha<1$;} \\
    \frac{\alpha-1}{2\alpha}, & \hbox{$\alpha \geq 1$.}
  \end{array}
\right.
\]

For the lower bound of $\dim_{\rm H}E(\Lambda, \psi)$, when $0 \leq \alpha \leq 1$, we have $\dim_{\rm H}E(\Lambda,\psi) =0$; when $\alpha >1$, let $t_n:=2\lfloor n^{\alpha-1}\rfloor$ and
\[
\mathbb{E}(\{t_n\})=\big\{x\in (0,1): nt_n\leq a_n(x)<(n+1)t_n, \forall n\geq1\big\}.
\]
Then $\{t_n\}$ is non-decreasing, and so $\mathbb{E}(\{t_n\})$ is a subset of $E(\Lambda,\psi)$. Since
\[
\xi= \limsup\limits_{n\to\infty}\frac{2\log(n+1)!+\log t_{n+1}}{\log(t_1 t_2\cdots t_n)} = \frac{2}{\alpha -1},
\]
applying Lemma \ref{fmsw}, we deduce that
\begin{align*}
\dim_{\rm H}E(\Lambda,\psi)\geq\dim_{\rm H}\mathbb{E}(\{t_n\}) = \frac{1}{2+\xi} = \frac{\alpha -1}{2\alpha}.
\end{align*}

\subsubsection{Case $\psi(n)/\log n\to\infty$}

For the upper bound of $\dim_{\rm H}E(\Lambda,\psi)$, it follows from Theorem \ref{RAMA} (v) that
\[
\dim_{\rm H}E(\Lambda,\psi) \leq \dim_{\rm H}E(\psi) = \frac{1}{C+1}.
\]

For the lower bound of $\dim_{\rm H}E(\Lambda,\psi)$, let $t_n:=\lfloor \exp(\psi(n)+1)\rfloor$, then $t_n\geq2$ and $\{t_n\}$ is non-decreasing since $\psi$ is non-decreasing.
Write
\[
\mathbb{E}(\{t_n\})=\Big\{x\in(0,1): nt_n\leq a_n(x)<(n+1)t_n, \forall n\geq1\Big\}.
\]
Since $\psi(n)/\log n\to\infty$ as $n \to \infty$, we deduce that $\mathbb{E}(\{t_n\})\subseteq E(\Lambda,\psi)$. Applying Lemma \ref{fmsw}, we conclude that
 \begin{align*}
 \dim_{\rm H}E(\Lambda,\psi)\geq\dim_{\rm H}\mathbb{E}(\{t_n\}) = \frac{1}{2+\xi} \ \ \text{with}\ \ \xi= \limsup\limits_{n\to\infty}\frac{2\log(n+1)!+
\psi(n+1)}{\psi(1) +\cdots +\psi(n)}.
\end{align*}
By the Stolz-Ces\`{a}ro theorem, we get that
\begin{align*}
\xi &\leq \limsup\limits_{n\to\infty}\frac{2\log(n+1)!}{\psi(1) +\cdots +\psi(n)} + \limsup\limits_{n\to\infty}\frac{\psi(n+1)}{\psi(1) +\cdots +\psi(n)}\\
&\leq \limsup\limits_{n\to\infty}\frac{\log(n+1)}{\psi(n)} + C-1\\
&= C-1.
\end{align*}
Therefore,
\[
\dim_{\rm H}E(\Lambda,\psi)\geq \frac{1}{C+1}.
\]

{\bf Acknowledgement:}
The research is supported by the National Natural Science Foundation of China (Nos.\,11771153, 11801591, 11971195, 12071171, 12171107), Jiangsu Province Innovation \& Entrepreneurship Doctor Talent Program (No.\,JSSCBS20210201) and Guangdong Basic and Applied Basic Research Foundation (No.\,2021A1515010056).

\section*{Reference}


\begin{thebibliography}{99}
\bibitem{BF1912} F. Bernstein, {\it \"{U}ber eine Anwendung der Mengenlehre auf ein der Theorie der s\"{a}kularen St\"{o}rungen herr\"{u}hrendes Problem}, Math. Ann. 71 (1911), 417--439.

\bibitem{BE1909} E. Borel, {\it Les probabilit$\acute{e}$s d$\acute{e}$nombrables et leurs applications arithm$\acute{e}$tiques}, Rend. Circ. Mat. Palermo 27 (1909), 247--271.

\bibitem{BE1912} E. Borel, {\it Sur un probl$\grave{e}$me de probabilit$\acute{e}$s relatif aux fractions continues}, Math. Ann. 72 (1912) 578--584.

\bibitem{CWW} C.-Y. Cao, B.-W. Wang and J. Wu, {\it The growth rate of digits in infinite iterated function systems}, Studia Math. 217 (2013), 139--158.

\bibitem{Cus90} T.-W. Cusick, {\it Hausdorff dimension of sets of continued fractions}, Quart. J. Math. Oxford (2), 41 (1990), 277--286.

\bibitem{Fal90} K. Falconer, {\it Fractal Geometry: Mathematical Foundations and Applications}, John Wiley \& Sons, Ltd., Chichester, 1990.

\bibitem{FMS20} L.-L. Fang, J.-H. Ma and K.-K. Song, {\it Some exceptional sets of Borel-Bernstein theorem in continued fractions}, Ramanujan J. 56 (2021), 891--909.

\bibitem{FMSW21} L.-L. Fang, J.-H. Ma, K.-K. Song and M. Wu, {\it Multifractal analysis of the convergence exponent in continued fractions}, Acta Math. Sci. Ser. B 41 (2021), 1896--1910.

\bibitem{FWS18} L.-L. Fang, M. Wu, and L. Shang, {\it Large and moderate deviation principles for Engel continued fractions}, J. Theoret. Probab. 31 (2018), 294--318.

\bibitem{Good41} I. Good, {\it The fractional dimensional theory of continued fractions}, Math. Proc. Cambridge Philos. Soc. 37 (1941), 199--228.


\bibitem{Hirst} K. Hirst, {\it Continued fractions with sequences of partial quotients}, Proc. Amer. Math. Soc. 38 (1973), 221--227.




\bibitem{IK02} M. Iosifescu and C. Kraaikamp, {\it Metrical Theory of Continued Fractions. Mathematics and Its Applications}, Kluwer Academic Publishers, Dordrecht, 2002.

\bibitem{lesJR12} T. Jordan and M. Rams, {\it Increasing digit subsystems of infinite iterated function systems}, Proc. Amer. Math. Soc. 140 (2012), 1267--1279.

\bibitem{Khi64} A. Ya. Khinchin, {\it Continued Fractions}, University of Chicago Press, Chicago, 1964.


\bibitem{LR21} L.-M. Liao and M. Rams, {\it Big Birkhoff sums in $d$-decaying Gauss like iterated function systems}, to appear in Studia Mathematica.


\bibitem{Luc97} T. {\L}uczak, {\it On the fractional dimension of sets of continued fractions}, Mathematika 44 (1997), 50--53.

\bibitem{R85} G. Ramharter, {\it Eine Bemerkung\"{u}ber gewisse Nullmengen von Kettenbr\"{u}chen}, Ann. Univ. Sci. Budapest. E\"{o}tv\"{o}s Sect. Math. 28 (1985), 11--15.

\bibitem{HT21} H. Takahasi, {\it Hausdorff dimension of sets with restricted, slowly growing partial quotients},  to appear in Proc. Amer. Math. Soc.
\bibitem{lesWW08A} B.-W. Wang and J. Wu, {\it A problem of Hirst on continued fractions with sequences of partial quotients}, Bull. Lond. Math. Soc. 40 (2008), 18--22.

\bibitem{WW08} B.-W. Wang and J. Wu, {\it Hausdorff dimension of certain sets arising in continued fraction expansions}, Adv. Math. 218 (2008), 1319--1339.
\end{thebibliography}
\end{document}